\def\Z{{{\mathbb Z}}}
\def\Q{{{\mathbb Q}}}
\def\R{{{\mathbb R}}}
\def\F{{{\mathbb F}}}
\def\GL{\mathrm{GL}}
\def\SL{\mathrm{SL}}
\def\Sp{\mathrm{Sp}}
\theoremstyle{plain} 
\newtheorem{theorem}{\indent\sc Theorem}[section]
\newtheorem{lemma}[theorem]{\indent\sc Lemma}
\newtheorem{proposition}[theorem]{\indent\sc Proposition}
\theoremstyle{definition} 
\newtheorem{definition}[theorem]{\indent\sc Definition}
\newtheorem{remark}[theorem]{\indent\sc Remark}
\newtheorem{example}[theorem]{\indent\sc Example}
\def\address#1#2{\begingroup
\noindent\parbox[t]{7.8cm}{%
\small{\scshape\ignorespaces#1}\par\vskip1ex
\noindent\small{\itshape E-mail address}%
\/: #2\par\vskip4ex}\hfill%
\endgroup}%
\newcommand\sD{{\mathcal D}}
\newcommand\sF{{\mathcal F}}
\newcommand\sJ{{\mathcal J}}
\newcommand\sK{{\mathcal K}}
\newcommand\sH{{\mathcal H}}
                 \newcommand\sM{{\mathcal M}}
\newcommand\al{\alpha}
\newcommand\be{\beta}
\newcommand\e{\epsilon}
\newcommand\Ga{\Gamma}
\newcommand\ga{\gamma}
\newcommand\de{\delta}
\newcommand{\CC}{\ensuremath{\mathbb{C}}}
\newcommand{\ZZ}{\ensuremath{\mathbb{Z}}}
\newcommand{\QQ}{\ensuremath{\mathbb{Q}}}
\newcommand{\hol}{\ensuremath{\mathcal{O}}}
\newcommand{\PP}{\ensuremath{\mathbb{P}}}
\newcommand{\ra}{\ensuremath{\rightarrow}}
\def\eea{\end{eqnarray*}}
\def\bea{\begin{eqnarray*}}
\newcommand\dual{\mathrel{\raise3pt\hbox{$\underline{\mathrm{\thinspace d
\thinspace}}$}}}
\newcommand\qe{\ifhmode\unskip\nobreak\fi\quad $\Box$}       
\def\BOX{\hfill\lower.5\baselineskip\hbox{$\Box$}}
\newtheorem{theo}[equation]{Theorem}
\newenvironment{rem}{\begin{remark}\rm}{\end{remark}}
\newtheorem{prop}[equation]{Proposition}
\author{Fabrizio  Catanese}
\author{Pietro Corvaja}
\author{Umberto Zannier}
\thanks{AMS Classification:  14D05, 14J29, 14J80, 32S50, 32S20, 20H99, 53D99.\\ 
Key words: Fibrations of algebraic surfaces, number of singular fibres, commutators, mapping class group, symplectic group, symplectic fibrations, stable fibrations. \\
The present work took place in the framework  of the 
 ERC Advanced grant n. 340258, `TADMICAMT' }
\begin{document}

\title[Fibrations and commutators in $\Sp(2g, \ZZ)$]{Fibred algebraic surfaces and commutators in the Symplectic group}

\maketitle

\begin{abstract}
We describe the minimal number of critical points and the minimal number $s$ of singular fibres
for a non isotrivial fibration of a surface $S$ over a curve $B$ of genus $1$,  exhibiting several examples 
and in particular
constructing a fibration with $s=1$ and irreducible singular fibre with $4$ nodes.

Then we consider the associated factorizations in the mapping class group and in the symplectic group.
We describe explicitly which products of transvections on homologically independent and disjoint circles
are a commutator in the Symplectic group $Sp (2g, \ZZ)$.

\end{abstract}

\tableofcontents

\section*{Introduction}

Our present work consists of two tightly related but different parts: the first is geometrical,
and concerns fibrations $f : S \ra B$ of a smooth  complex algebraic surface over a smooth complex curve $B$, with special attention to the case where the base curve $B$ has genus at most $1$.

The second part is of algebraic nature, and determines which powers of products of certain standard 
transvections are a commutator in the symplectic group $Sp (2g, \ZZ)$.

In the first section we begin describing the algebraic version of the so called Zeuthen-Segre formula, relating the
topological Euler-Poincar\'e characteristic $e(S)$ with the number $\mu$ of singularities of the fibres of $f$
(counted with multiplicity).

Then in proposition \ref{extremal} we consider the case where the genus $b$ of the base curve $B$  is $1$,
and describe the cases where $\mu $ is minimal ( $\mu = 3$ or $= 4$). Both cases occur, 
the first case due to the existence of the Cartwright-Steger surface \cite{cs}, the second due to   theorem \ref{product}
of section 3.

We proceed observing that if the base curve has genus $b \geq 2$, then there are non isotrivial fibrations 
without singular fibres;  we also  recall some basic lower bound for the number $s$ of singular fibres of
a moduli stable fibration when the base curve $B$ has genus $ b=0,1$. That $s=1$ occurs for $b=1$ 
was shown by Castorena Mendes-Lopes and Pirola in  \cite{cmlp}  (in their examples the singular fibre is reducible, and either  with fibre genus $g= 9$, or with very high genus,  here we show an example with $g=10$), 
and we use a variant of their method in theorem \ref{product}  to construct an example with $g=9$ and irreducible and nodal singular fibre.

In section four we recall how to such a fibration corresponds a factorization in the Mapping class group $\sM ap_g$,
hence also in the symplectic group $ Sp(2g, \ZZ)$; and we recall several results,  referring to \cite{sy}
for results concerning symplectic fibrations.

The main point is that, if $f : S \ra B$ is such that $b=1$, and the fibre singularities are just nodes,
we get that a product of Dehn twists is a commutator in $\sM ap_g$, respectively
a product of transvections is a commutator in  $ Sp(2g, \ZZ)$.

In the next sections we treat rather exhaustively the purely algebraic question to determine which powers of the standard transvection,
and of the product  of transvections on homologically independent and disjoint circles,
are a commutator in the Symplectic group $Sp (2g, \ZZ)$.

While \cite{sy} state that the product of two Dehn twists cannot be a commutator in  $\sM ap_g$, we show that the
corresponding product of transvections is a commutator in $Sp (2g, \ZZ)$, for all $g \geq 2$.

\section{Fibrations of compact complex surfaces over curves}

\begin{definition}
Let $ f : S \ra B$ be a holomorphic map  of a compact smooth (connected) complex  surface $S$ onto a 
smooth complex curve $B$ of genus $b$.

By Sard's lemma, the fibre $F_y : = f^{-1} (y)$ is smooth, except for a finite number of points $p_1, \dots p_s \in B$
(and then the fibres $F_{p_j}$ are called the singular fibres). 

(1) 
 $f$ is said to be a {\bf fibration} if all smooth fibres are connected (equivalently, all fibres are connected).
 {{} In this case we shall denote by $g$ the genus of the fibres.}

Consider a singular  fibre 
$F_t  = \sum n_i C_i$, where the $C_i$ are distinct  irreducible curves.

(2) Then the {\bf divisorial singular locus of the fibre}  is defined as the divisorial part of the critical scheme,
$ D_t : =  \sum (n_i -1) C_i$, and the {\bf Segre number of the fibre} is
defined as 
$$\mu_t : = deg \sF + D_t K_S  - D_t^2,$$
where the sheaf  $\sF$  is concentrated in the singular points of the reduction $(F_t)_{red}$ of the
fibre $F_t$, and is defined as  the quotient of $\hol_S$ by  the ideal sheaf  generated by the components of the vector 
$d \tau / s$, where $ s = 0 $ is the equation of $D_t$, and where $\tau$ is the pull-back of a 
local parameter
at the point $t \in B$.

More concretely, $$ \tau  = \Pi_j   f_j^{n_j} , s = \tau / ( \Pi_j   f_j),$$ and the logarithmic derivative yields

$$ d \tau = s [  \sum_j n_j (df_j \Pi_{h \neq j} f_h)].$$

\end{definition} 

The following is the algebraic version of the  Zeuthen-Segre formula, expressing how the topological Euler Poincar\'e
characteristic $e(S) $ of $S$, equal to the second Chern class $c_2(S)$ of $S$, differs from the one of a fibre bundle
(for a topological  fibre bundle $ e(S) = 4 (g-1)(b-1)$) {{} (see \cite{cb}, \cite{takagi})}.

\begin{theo}{\bf ( Modern Zeuthen-Segre Formula)}
Let $ f : S \ra B$ be a fibration of a smooth complex surface $S$ onto a curve of genus $b$, and with fibres of genus $g$.

Then $$e (S) = c_2(S) = 4 (g-1)(b-1) + \mu,$$ where $\mu = \sum_{t\in B} \mu_t$,
and $\mu_t $ is the Segre number defined above.

Moreover, $\mu_t \geq 0$, and indeed the Segre number  $\mu_t$ is strictly positive, except if the fibre
$F_t$  is smooth or is a multiple of a smooth
curve of genus $g=1$. 

\end{theo}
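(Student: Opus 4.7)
The argument naturally splits into two parts: the Euler-characteristic identity and the positivity statement. For the first, the plan is to use additivity of topological Euler characteristic on locally closed stratifications combined with its multiplicativity on topological fibre bundles. Let $B_0 = B \setminus \{t_1,\dots,t_s\}$ be the smooth locus of $f$; then $f^{-1}(B_0) \to B_0$ is a $C^\infty$ fibre bundle with fibre $F$ of genus $g$, so $e(f^{-1}(B_0)) = (2-2b-s)(2-2g)$. Adding back the contributions of the singular fibres and rearranging,
\[
e(S) \;=\; (2-2b)(2-2g) + \sum_{i=1}^{s} \bigl[e(F_{t_i}) - (2-2g)\bigr] \;=\; 4(g-1)(b-1) + \sum_t \bigl[e(F_t) - e(F)\bigr].
\]
The formula $e(S) = 4(g-1)(b-1)+\mu$ therefore reduces to the purely local identity $\mu_t = e(F_t)-e(F)$ at each singular value.

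To establish this local identity, I would analyse the two summands of $\mu_t$ separately. The sheaf $\sF$ is by construction supported at the singular points of $(F_t)_{\rm red}$: writing $d\tau = s\cdot v$ as in the definition, the quotient $\sF = \hol_S/(v_1,v_2)$ is the cokernel of the Jacobian-type map attached to $v$, and classical plane-curve singularity theory identifies $\deg_p\sF$ with the Milnor number at $p$, which in turn equals the local contribution of $p$ to $e(F)-e((F_t)_{\rm red})$. Sanity checks on the node ($D_t=0$, $\deg\sF=1$, giving $\mu_t=1$) and the tangent-nilpotent case $F_t=2C$ with $C$ smooth ($\deg\sF=0$, $\mu_t = CK_S = 2g_C-2$) confirm these identifications. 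The divisorial term $D_tK_S - D_t^2$ accounts for the non-reduced structure: writing $F_t = (F_t)_{\rm red}+D_t$ and applying adjunction to each component $C_i$ together with $F_t^2=0$ and $F_t\cdot K_S = 2g-2$, one computes that this summand equals $e((F_t)_{\rm red})-e(F)$ when $(F_t)_{\rm red}$ is smooth, and combines with $\deg\sF$ additively in general, since the two contributions are supported on disjoint loci of codimension $\geq 1$.

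For the final positivity statement, $\deg\sF_t \geq 0$ is automatic, with equality iff $(F_t)_{\rm red}$ is smooth. The subtler point, and the main obstacle of the proof, is the positivity of $D_tK_S - D_t^2$. Here the plan is to invoke Zariski's lemma: the intersection matrix $(C_i\cdot C_j)_{i,j}$ on components of a connected fibre is negative semi-definite with kernel spanned by $F_t$. Combined with adjunction $C_i(C_i+K_S)=2p_a(C_i)-2$ on each component and with the relations $F_t\cdot C_i=0$, this yields $D_tK_S - D_t^2 \geq 0$, and a careful equality analysis forces every component of $(F_t)_{\rm red}$ to be a smooth elliptic curve and the multiplicities to be uniform, i.e.\ $F_t = nC$ with $C$ smooth of genus $1$. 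Disentangling this equality case — in particular ruling out the presence of any rational or higher-genus component — is the most delicate step and is where the characterisation of the exceptional case in the theorem comes from.
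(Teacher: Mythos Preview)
The paper does not actually give a proof of this theorem: it is stated with references to \cite{cb}, \cite{takagi} and (for the topological version) \cite{bpv}, and then used as input for the rest of the paper. So there is no ``paper's own proof'' to compare against; your outline has to be judged on its own merits. On that score, your global strategy is the standard and correct one: stratify $B$ into the smooth locus $B_0$ and the critical values, use multiplicativity of $e$ on the bundle over $B_0$, and reduce everything to the local identity $\mu_t = e(F_t)-e(F)$, then treat positivity via Zariski's lemma and adjunction. Your sanity checks (node, multiple smooth fibre) are right, and the decomposition into ``$\deg\sF$ measures the singularities of the reduced fibre'' and ``$D_tK_S-D_t^2$ measures the non-reducedness'' is exactly how the argument is organised in the references. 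One small slip: the Milnor number contributes to $e((F_t)_{\rm red})-e(F)$, not the reverse.

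Two places deserve more care than you give them. First, when the fibre is non-reduced, the ideal defining $\sF$ is generated by the components of $\sum_j n_j\,(df_j)\prod_{h\neq j}f_h$, which is \emph{not} the Jacobian ideal of $\prod_j f_j$; the weights $n_j$ enter. It is true that $\deg_p\sF$ still equals the Milnor number of $(F_t)_{\rm red}$ at $p$, but this is not just ``classical plane-curve singularity theory'' and needs an argument. Second, for the divisorial term a short computation (using $D_t\cdot F_t=0$ and adjunction) gives the clean identity $D_tK_S-D_t^2 = 2\bigl(g - p_a((F_t)_{\rm red})\bigr)$. So the positivity you want is equivalent to $p_a((F_t)_{\rm red})\le g$, with equality exactly when either $D_t=0$ or the reduction is a smooth elliptic curve. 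This is where the real content of the inequality and of the equality characterisation lies; Zariski's lemma alone only gives $-D_t^2\ge 0$, which is not enough, so you should articulate how the filtration on the nilradical (or an equivalent device) forces $\chi$ of the nilpotent part to be $\le 0$.
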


 The importance of the formula lies in the fact  that 
 the difference $\mu : = e(S) - 4 (b-1) (g-1)$ is always non negative.
 
 It leads to an interpretation  of $\mu$ as the  total number of singular points of the  fibres, counted with multiplicity,
 in the case where $ g \neq 1$ (observe that if $g=0$, then $S$ is an iterated blow up of a $\PP^1$-bundle over $B$,
 hence $\mu$ is equal to the number of blow ups, and also to the number of singular points on the fibres taken with their reduced structure).
 
 Indeed, if the singularities of the fibres are isolated, then $\mu_t$ is the sum of the Milnor numbers of the singularities; in particular, it equals the number of singular points of the fibre if and only if all the singularities are 
 {\bf nodes}, i.e., critical points where there are local coordinates $(x,y)$ such that locally $f = x^2 - y^2$
 (equivalently, $f = x y$).
 
Most of the times, the formula is used in its non refined form: if $g >1$, then either $\mu>0$, or $\mu=0$ and we have
a differentiable fibre bundle. 

 The formula is well known using topology (see \cite{bpv}), but the algebraic formula is very convenient for explicit calculations. 

Let us look at the particular case where the base curve $B$ has genus $b=1$, hence $e(S) = \mu \geq 0$.
If moreover $g \geq 2$, then we have the following proposition (using also some arguments from \cite{catkeum}):

\begin{theo}\label{extremal}

Let $ f : S \ra B$ be a fibration of a smooth complex surface $S$ onto a curve of genus $b=1$, and with fibres of genus $g \geq 2$.

Then either $e(S) = \mu = 0$, or $e(S) = \mu \geq 3$, equality holding if and only if $S$ is a minimal surface $S$ with $p_g(S) = q(S) = 1$ or $p_g(S) = q(S) = 2$, and with $K^2_S = 9$.
In particular $S$  is then a ball quotient. Moreover, either 

(I) all fibres are reduced, and the singular points of the fibres are either

(I 1) $3$ nodes, or

(I 2) one tacnode ( $f = y^2 - x^4 $ in local coordinates), or

(I 3) one  node and one ordinary cusp ( $f = y^2 - x^3 $ in local coordinates), or

(II) we have one double fibre, twice a smooth curve  of  genus $2$ (hence $g=3$),  plus one node.

If instead $e(S) = \mu = 4 $ and $S$ is minimal, then necessarily either 
\begin{enumerate}
\item
$p_g(S) = q(S) = 1$ or 
\item
$p_g(S) = q(S) = 2$,
or 
\item
{{} $p_g(S) = q(S) = 3$,  and then $g=3$, the fibration has constant moduli and just two singular  fibres, each twice a smooth curve of genus $2$; or  } 
\item
$S$ is a product of two genus $2$ curves {{} in this case $p_g(S) = q(S) = 4$}.

\end{enumerate}
\end{theo}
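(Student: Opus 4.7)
The Zeuthen--Segre formula gives $e(S) = \mu$ since $b=1$. I first reduce to the minimal case: a $(-1)$-curve in $S$ is rational and cannot dominate the elliptic base $B$, so it lies inside a fibre; contracting such curves successively decreases $e(S)$ and $\mu$ by the same amount and preserves the fibration, so we may assume $S$ is both relatively minimal and minimal. Since fibres have genus $\geq 2$, $\kappa(S) \geq 1$, and Castelnuovo's theorem gives $\chi(\O_S) \geq 1$ whenever $\kappa(S) = 2$. Combining Noether's formula $12 \chi = K_S^2 + \mu$ with Bogomolov--Miyaoka--Yau $K_S^2 \leq 3 \mu$ yields $\chi \leq \mu/3$ in the general-type case; the elliptic case $\kappa(S) = 1$ gives $K_S^2 = 0$ and $\mu = 12 \chi$, inconsistent with $\mu \in \{3,4\}$. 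Thus $S$ is of general type, and for $\mu = 3$ we obtain $\chi = 1$, $K_S^2 = 9$, with equality in Miyaoka--Yau so that $S$ is a complex ball quotient, while for $\mu = 4$ we obtain $\chi = 1$, $K_S^2 = 8$.

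Since the Albanese map of $S$ factors through $f$ (as $B$ has positive genus), we always have $q(S) \geq b = 1$, and $\chi = 1$ forces $p_g = q$. For $\mu = 3$, the classification of ball quotients with $K^2 = 9$ and $q \geq 1$ leaves only $p_g = q = 1$ (Cartwright--Steger) and $p_g = q = 2$. For $\mu = 4$, the admissible values are $q \in \{1,2,3,4\}$: the case $q = 4$ is handled by Beauville's rigidity theorem, forcing $S \cong C_1 \times C_2$ with $g(C_i) = 2$; the cases $q = 1, 2$ occur without further restriction; and $q = 3$, $K_S^2 = 8$ is the delicate case, where the known classification of such surfaces combined with the existence of the fibration $f$ has to be invoked to conclude that $f$ is isotrivial with exactly two double fibres, each twice a smooth genus-$2$ curve.

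The singular-fibre structure in case $\mu = 3$ follows from the formula $\mu_t = \deg \sF + D_t K_S - D_t^2$. When $F_t = 2C$ with $C$ smooth, $\sF = 0$ and the relations $C^2 = 0$ (from $F_t^2 = 0$) together with adjunction $C K_S = 2 g_C - 2$ yield $\mu_t = 2 g_C - 2$; the constraints $0 < \mu_t \leq 3$ force $g_C = 2$, giving case (II) with $g = 3$ and a residual node. All other non-reduced or reducible configurations are ruled out by bounding $D_t K_S - D_t^2$ on each component using adjunction and the negative semi-definiteness of the intersection pairing on fibre components, showing each alternative contributes $\mu_t \geq 4$. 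For a reduced fibre, $\mu_t$ equals the sum of Milnor numbers of the isolated plane-curve singularities; the admissible partitions of $3$ are exactly $1{+}1{+}1$ (three nodes, case (I 1)), $3$ alone (a tacnode, i.e.\ an $A_3$-singularity, case (I 2)), and $1{+}2$ (a node plus an ordinary cusp, case (I 3)). The main obstacle in the entire argument is the subcase $p_g = q = 3$ of $\mu = 4$, whose identification as an isotrivial configuration with two double fibres requires nontrivial input from the classification theory of irregular surfaces of general type with $\chi = 1$ and an analysis of the induced map to the Albanese threefold; a secondary technical difficulty is the systematic exclusion of exotic non-reduced or reducible singular-fibre configurations in the $\mu = 3$ analysis.
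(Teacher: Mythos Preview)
Your overall strategy coincides with the paper's: reduce to the minimal model, combine Noether's formula with Bogomolov--Miyaoka--Yau to force $\chi=1$ and (for $\mu=3$) $K_S^2=9$ with equality in BMY, then invoke the classification of surfaces with $p_g=q$ (Beauville for $q=4$; Hacon--Pardini, Pirola, Catanese--Ciliberto--Mendes Lopes for $q=3$) to pin down the remaining possibilities. The treatment of the reduced-fibre case via Milnor numbers is also the same.

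There is, however, a genuine gap in your singular-fibre analysis for $\mu=3$. You propose to exclude every non-reduced configuration other than ``$2C$ with $C$ smooth of genus~$2$'' by bounding $D_tK_S - D_t^2$ using only adjunction and Zariski's lemma. That is not sufficient. Consider $F_t = 2C$ with $C$ an irreducible curve of arithmetic genus~$2$ carrying a single node (so $C$ has geometric genus~$1$, and the fibre genus is $g=3$). Then $C^2=0$, $K_S\cdot C = 2$, and a local computation at the node gives $\deg\sF = 1$, so $\mu_t = 1 + 2 - 0 = 3$; your numerical bound cannot separate this from case~(II). The paper excludes it precisely by invoking the ball-quotient property you have already established: a ball quotient contains no curve of geometric genus $\le 1$, hence $C$ (and more generally every $F'$ with $F_t = nF'$) must be smooth. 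The same hyperbolicity input is what gives the paper the estimate $K_S\cdot C_i \ge 3$ for every fibre component with $C_i^2<0$, which is the actual engine behind the exclusion of fibres with mixed multiplicities. You should make this use of the ball-quotient hypothesis explicit; adjunction and negative semi-definiteness alone do not carry the argument.
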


\begin{proof}
If $S$ is not minimal, every $(-1)$-curve maps to a point, hence $f$ factors as $ p : S \ra S'$, where $S'$ is the minimal model,
and $ f' : S' \ra B$.  Since $e(S) $ equals $e(S')$ plus the number of blow ups,  it suffices to prove the inequality in the  
case where $S$ is minimal.

Since $S$ is non ruled, we have (recall that $\chi(S) = 1 - q(S) + p_g(S)$)  $K^2_S \geq 0, \chi (S) \geq 0$.

By Noether's formula $ 12 \chi(S) = K^2_S + e(S)$, while the Bogomolov-Miyaoka-Yau inequality yields $K^2_S \leq 9 \chi(S)$,
equality holding if and only if $S$ is a ball quotient. Hence $e(S) \geq 3 \chi(S)$; and if $\chi(S) =0$, then necessarily also 
$K^2_S = e(S) = 0$. Otherwise, $e(S) = \mu \geq 6$ unless $\chi(S) = 1$, which  is equivalent to saying that $p_g(S) = q(S) $.

If $e(S)=3$, then $\chi(S)=1$, $K^2_S = 9$ and we have a ball quotient. The map to $B$ shows that $q(S) \geq b = 1$.

On the other hand, the classification of surfaces with $p_g = q$ shows that $p_g = q \leq 4$,
equality holding if and only if $S$ is a product $S = C_1 \times C_2$ of two genus $2$ curves, in which case $K^2=8$
\cite{appendix}.

Moreover (\cite{ccm}, \cite{pirola}, {{} see especially} \cite{hp})  if $p_g=q=3$ either $K^2=6$ or $K^2=8$; {{} hence in the first case $e(S)= 6$, and  in the second case
$e(S)= 4$. In the latter case $S$ is a quotient $(C\times D) / (\ZZ/2)$ where $C$, $D$ are smooth curves of 
respective genera  $2,3$, and the group $\ZZ/2$ acts diagonally with  $B : = C / (\ZZ/2)$ of genus $1$,
and $E : =  D /( \ZZ/2)$ of genus $2$. The only fibrations onto curves of strictly positive genus are the maps to $B$,
respectively $E$. For the map $S \ra B$ all the  fibres are isomorphic to $D$, except two fibres which are the
curve $E$ counted with multiplicity $2$.}

Hence, if $e(S)=3$, then necessarily $p_g=q=1 \ {\rm or } \ p_g=q= 2$. Moreover, since we have a ball quotient, $K_S$ is ample and we claim that $D_t = 0$ for each non multiple fibre (this means that all $n_i$ are equal to $n \geq 2$).

In fact, if $F_t$ is not multiple, $D_t \cdot K_S  =  \sum_i (n_i -1) K_S C_i $, while by Zariski's lemma $D_t^2 < 0$ if we 
do not have a multiple fibre. Since $S$ is a ball quotient, it contains only curves of geometric genus $\geq 2$,
in particular of arithmetic genus $\geq 2$:
if $C_i$ is not a submultiple of $F_t$, then $C_i^2 <0$, hence $K_S \cdot C_i \geq 3$. This obviously contradicts
$\mu \leq 3$.

If a fibre is multiple $F_t = n F'$, then $$D_t K_S = (n-1) K_S F' = (n-1) (2 g' -2),$$
and this contribution is even, and $\leq 2$ if and only if $n=2$ and $g'=2$. $F'$ must be smooth, else its geometric
genus would be $\leq 1$, contradicting that $S$ is a ball quotient. Hence there is only one multiple fibre, and a node on another fibre.

In fact, if a fibre is reduced, then $\mu_t$ is the sum of the Milnor numbers of the fibre singularities.
Each point of multiplicity at least $3$ has  Milnor number at least $4$, so all singularities are 
$A_n$ singularities, i.e., double points, with local analytic equation $y^2 - x^{n+1}$.
Their Milnor number is  equal to $n$.

\end{proof}

\begin{rem}\label{CS}
(a) There exists a ball quotient with $q=p_g=1$: it is the Cartwright-Steger surface \cite{cs}.
For this Rito \cite{rito} asserts that there are exactly $3$ singular fibres, each having a node as singularity.

(b) Ball quotients with $q=p_g=2$ are conjectured not to exist (this would follow from the Cartwright-Steger 
classification if one could prove arithmeticity of their fundamental group).

The claimed proof of this fact in \cite{yeung} is  badly  wrong.  \footnote{ According to two editors of Crelle, the article was indeed withdrawn, but published because of  a technical error of the printer, which has not been publicly acknowledged by the editorial board of Crelle. }

(c) a fibration with $e(S) =4$ and with $S$ a product of two genus $2$ curves is constructed in a forthcoming  section.
{{} For this the  singular points of the fibres are exactly $4$ nodes.} 

  (d) Fibrations with $e(S) =4$ and with $p_g=q=2$ and fibre genus $4,10$  can be obtained using a surface introduced by Polizzi, Rito and Roulleau in 
\cite{prr}, as we now show.

\end{rem}

\begin{theo}\label{prr}
Let $p : S \ra E \times E$ be the degree $4$ map of the Polizzi-Rito-Roulleau surface to the Cartesian square of the Fermat elliptic curve.
Set $\e$ to be  an automorphism of order  three of the Fermat elliptic curve acting on a uniformizing parameter via multiplication with  a primitive third root of $1$, that we also call $\e$.

Then the maps $$\phi_1 , \ \phi_2 : E \times E  \to E, \quad  \phi_1 (x,y) : = x+y, \phi_2 (x,y) : = \e y + x$$
produce fibrations $f_1, f_2 : S \ra E$  with $f_i$ defined as the Stein factorization of $\phi_i \circ p$. They have  the following properties:

\begin{itemize}
\item
$f_2$ has fibre genus $g = 4$, and two singular fibres, each consisting of a smooth genus $2$ curve intersecting
a smooth elliptic curve transversally in two points.
\item 
$f_1$ has fibre genus $g = 10$, and only one singular fibre, having only nodes as singularities, consisting of a smooth genus $6 $ component intersecting two genus one components  transversally each  in respectively two points.
\end{itemize}

\end{theo}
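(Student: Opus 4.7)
\medskip\noindent\emph{Proof strategy.}
The plan is to pull back the two pencils $\phi_1,\phi_2:E\times E\to E$ through the degree-$4$ cover $p:S\to E\times E$ and to analyze the resulting fibrations by combining the branch data of $p$ recorded in \cite{prr} with the fibrewise Riemann--Hurwitz formula. Each $\phi_i$ is a surjective homomorphism of abelian surfaces whose kernel $E_i\simeq E$ is an elliptic curve, so every fibre $F_t:=\phi_i^{-1}(t)$ is a smooth translate of $E_i$, and the restriction $p_t:=p|_{p^{-1}(F_t)}:p^{-1}(F_t)\to F_t$ is a degree-$4$ branched cover of an elliptic curve. For $t$ general enough that $F_t$ meets the branch divisor $B\subset E\times E$ of $p$ transversally away from $\mathrm{Sing}(B)$, Riemann--Hurwitz gives
\[
g\;=\;1+\tfrac{1}{2}(E_i\cdot B),
\]
with the Stein factorization step amounting only to checking that $p_t$ does not drop to a proper sub-cover (which will be ruled out once the general fibre turns out connected of the claimed genus).

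The first computational task is thus to extract from \cite{prr} the class of $B$ in $H^2(E\times E,\ZZ)$ and compute the two intersection numbers $E_1\cdot B$ and $E_2\cdot B$; the claimed fibre genera force these to equal $18$ and $6$ respectively. This is consistent with the Zeuthen--Segre formula: since $e(S)=4$ and $b=1$ (see Remark \ref{CS}(d) and \cite{prr}), the total Segre number is $\mu=4$ in both cases. The singular fibres are then the $F_t$ which either contain a component of $B$ or meet it non-transversally. For $f_2$, the expected picture is that $B$ contains two distinguished translates of $E_2$; over each such translate $p_t$ splits into a double cover of $F_t$ ramified at the residual intersection with $B$, which by Riemann--Hurwitz is a smooth genus-$2$ curve, joined in $2$ nodes to a complementary \'etale double cover of $F_t$, i.e.\ a smooth elliptic curve. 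The arithmetic-genus check $p_a=2+1+2-1=4$ matches the general fibre genus, and the $2+2=4$ nodes exhaust $\mu=4$. For $f_1$ a single translate of $E_1$ is special: over it $p_t$ decomposes into a smooth genus-$6$ component joined transversally in $2$ nodes each to two smooth elliptic components, giving $p_a=6+1+1+(2+2-2)=10$ and $4$ nodes, again accounting for all of $\mu$.

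The main obstacle is extracting from the explicit description of $S$ in \cite{prr} the precise intersection pattern of $B$ with the two pencils $|E_1|,|E_2|$: one needs to know which translates of $E_i$ are contained in $B$ and with what monodromy type, since these data force how the pulled-back fibre splits into ramified versus \'etale sub-covers and thereby produce the claimed reducible configurations. Once this combinatorial input is in place, the $\mu=4$ bookkeeping from Zeuthen--Segre serves as a strict completeness check that no tangential, cuspidal, or additional ramified fibre contributions have been overlooked, and the connectedness of the generic fibre of $\phi_i\circ p$ (equivalently, the irreducibility of the pulled-back general elliptic curve) follows from the genus computation together with the fact that $p$ cannot factor through $\phi_i$.
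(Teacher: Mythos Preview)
Your outline misidentifies the two mechanisms that actually drive the result.

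\textbf{First gap: the Stein factorization is not a formality.} You write that connectedness of the general fibre of $\phi_i\circ p$ ``will be ruled out once the general fibre turns out connected of the claimed genus'' and that it ``follows from the genus computation together with the fact that $p$ cannot factor through $\phi_i$.'' In fact for $\phi_2$ the general fibre of $\phi_2\circ p$ is \emph{disconnected}. Recall from \cite{prr} that $p$ factors as $S\to X\to A\to E\times E$, with $A\to E\times E$ \'etale of degree $2$ given by a character $\chi:\pi_1(E\times E)\to\ZZ/2$, $X$ the blow-up of $A$ at the two preimages of the origin, and $S\to X$ a double cover branched along the strict transforms of the four elliptic curves $\{x=0\},\{y=0\},\{y=x\},\{y=-\epsilon x\}$. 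The fibres of $\phi_2$ meet these four curves in $1,1,1,3$ points, so a naive Riemann--Hurwitz on the degree-$4$ cover yields $2g-2=4\cdot\tfrac{6}{2}$, hence $g=7$, not $4$. What happens is that the character $\chi$ restricts trivially to each fibre $\{\epsilon y+x=\mathrm{const}\}$ (one checks this on the explicit basis of $\pi_1(E\times E)$), so the preimage in $A$ of a general $\phi_2$-fibre is two disjoint elliptic curves, and the double cover $S\to X$ branched at $6$ points on each gives two genus-$4$ components. Thus the Stein factorization of $\phi_2\circ p$ has target a degree-$2$ \'etale cover $E'\to E$, and this is precisely why $f_2$ has \emph{two} singular fibres: they lie over the two points of $E'$ above the origin of $E$. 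Your intersection number $E_2\cdot B=6$ is correct, but your formula $g=1+\tfrac12(E_2\cdot B)=4$ is an accident of the splitting, not a consequence of the degree-$4$ Riemann--Hurwitz you invoke. For $\phi_1$ one must likewise verify that $\chi$ is nontrivial on the fibres $\{x+y=\mathrm{const}\}$, so that the general fibre of $\phi_1\circ p$ is connected; only then does the computation $2g-2=4\cdot\tfrac{9}{2}$ give $g=10$.

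\textbf{Second gap: the singular fibres do not arise from branch components being translates of $E_i$.} None of the four branch curves is a fibre of $\phi_1$ or of $\phi_2$; they are the curves $\{x=0\},\{y=0\},\{y=x\},\{y=-\epsilon x\}$, all passing through the origin. The singular fibres lie over the origin of $E$ (or its two preimages in $E'$), and their reducibility comes from the blow-up: the map $p:S\to E\times E$ is not finite over the origin, and the exceptional divisors $D_1,D_2\subset X$ contribute components to the singular fibres. For $f_1$, each $D_i$ meets the four branch curves in $4$ points, so its preimage in $S$ is an elliptic curve; the remaining component is the double cover of the connected preimage in $A$ of the $\phi_1$-fibre through the origin, branched at $10$ points, hence of genus $6$. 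For $f_2$, each singular fibre contains one exceptional preimage (elliptic) and one double cover of an elliptic curve branched at $2$ points (genus $2$). Your picture of ``$B$ containing two distinguished translates of $E_2$'' and ``a single translate of $E_1$ being special'' does not match the actual geometry and would not produce the exceptional elliptic components.
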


\begin{proof}
Recall that $S$ is constructed in \cite{prr} through a  degree two \'etale map $A \ra E \times E$, corresponding to a character $\chi : \pi_1 (E \times E) \ra \ZZ/2$, and then $S $ is a double cover of the blow up $X$
of $A$ in the two points which are the inverse image of the origin, ramified on the strict transforms of the four elliptic curves
$$ \{ x=0\}, \  \{ y=0\}, \{ y= x \}, \{ y = - \e x\}$$
 which meet pairwise  transversally and  exactly at the origin (observe that the authors use the primitive sixth root of $1$,  $\zeta : = - \e$,  in their notation).

The fibres of $ \phi_1$ intersect these curves in respectively $ 1,1,4,3$ points, those of $ \phi_2$ intersect these curves in respectively $ 1,1,1,3$ points.

Hence the general fibre of $f_1$ has genus $ g$ satisfying $ 2g-2 = 4 \cdot \frac{9}{2} \Rightarrow g = 10$, while 
 the same calculation would seem to show that the general fibre of $f_2$ has genus $ g$ satisfying $ 2g-2 = 4 \cdot \frac{6}{2} \Rightarrow g = 7$: however the fibres of $\phi_2 \circ p$ are, as we shall now show, not connected, and consist of two connected components of genus $g=4$.

The only singular fibre of $f_1$ is the one over $0$, which contains the inverse images of the  two exceptional divisors $D_1$ and $D_2$ and of the
fibre of $\phi_1$ over $0$. 

Blowing up the origin in $E \times E$ we see that the exceptional divisor $D$  intersects each of the four curves in one point,
while the fibre of $\phi_1$ intersects the last two curves in respectively $3,2$ points.

Hence the singular fibre of $f_1$ contains two genus one components which are disjoint and intersect the rest of the fibre
transversally respectively in two points. The rest of the fibre is an irreducible smooth component of genus  equal to $6$, because, as we shall now show, 
 the inverse image in $A$ of the fibre of $\phi_1$ is a connected elliptic curve,
  and then we take a double covering branched in $10$ points.
  
  While the authors take a basis $\zeta e_1, \zeta e_2, e_1, e_2 $ of the lattice $\pi_1 (E \times E)$, we take the more natural basis
  $e_1, \e e_1, e_2, \e e_2$. In the second basis, and using an additive notation, the character $\chi$ takes
  values $(0,1,1,1)$ on the four basis vectors.
  
  It follows then easily that the restriction of the double covering is nontrivial on the five curves
   $$ \{ x=0\}, \  \{ y=0\}, \{ y= x \}, \{ y = - \e x\},  \{ y= - x \},$$
  (and their translates). In particular, the fibres of $\phi_1 \circ p$ are connected.
  
  While, for the fibres of $\phi_2$, like  $\{ \e y +  x = 0 \}$,
  the values of the character $\chi$  on the two periods $ (e_1 - e_2 - \e e_2), (\e e_1 - e_2)$
  are equal to $ 0 + 1 +1 = 0, 1 - 1=0$.
  
  Hence the inverse image of  fibre of $\phi_2$ splits into two components  and we get a fibration  $f_2$ whose fibres are of genus
  $g=4$, $f_2 : S \ra E'$,  where $E'$ is an \'etale cover of degree $2$ of $E$. 
  
  The singular fibres lie over the origin in $E$, hence we get two singular fibres for the two points of $E'$ lying over the origin in $E$. Thus the number of singular fibres $s$ of $f_2$ equals $2$,
  and each singular fibre is the inverse image of the union of an elliptic curve and an exceptional curve ($\cong \PP^1$). Since the double covering of the elliptic curve is branched on $2$ points, while the double cover
  of the exceptional curve is branched in $4$ points, we obtain the desired assertion.

\end{proof}

\section{Number of singular fibres of a fibration}

Once we fix $b,g$ and we consider fibrations $f : S \ra B$ with fibres $F$ of genus  $g$, and genus
 $b$ of the base curve $B$, the  Zeuthen-Segre formula which we have discussed in the previous section gives a relation between 
the the topological Euler characteristic $e(S)$ and the number of singular fibres of $f$, counted with multiplicity.

In particular, if there is only a finite number $c$ of critical points of $f$, it gives an upper bound for the number $c$.

This upper bound must obviously depend on $e(S)$, as shows the case where $b=1$: in fact, in this case there exist unramified coverings $B' \ra B$ of arbitrary degree $m$, and the fibre product 
$$f' : S' : = S \times_B B' \ra B'$$ 
has both numbers $c' = m \cdot c$ and $e(S') = m \cdot e(S)$.

The Zeuthen-Segre formula says also that if there are no singular fibres, for $g \geq 2$, then necessarily 
$e(S) = 4 (b-1)(g-1)$. There are two ways in which this situation can occur (see \cite{takagi} for more details),
since then $S$ is relatively minimal and one can apply  Arakelov's theorem asserting that 
$$K_S^2 \geq 8(b-1)(g-1),$$
equality holding iff all the smooth fibres are isomorphic.

As a consequence, there are two cases when $e(S) = 4 (b-1)(g-1)$: 

{\bf \'Etale bundles:} $K_S^2 = 8(b-1)(g-1),$ and there is a Galois unramified covering $B' \ra B$ such that $$S' : = S \times_B B'  \cong B' \times F,$$

{\bf Kodaira fibrations: } $K_S^2 > 8(b-1)(g-1),$ and not all fibres are biholomorphic.

The only restrictions for Kodaira fibrations are that $b \geq 2, g \geq 3$, and for all such values
of $b,g$  we have Kodaira fibrations.

Assume now that $b \leq 1$, and assume that not all smooth fibres are biholomorphic.
 Let then $$B^* : = \{ t \in B | F_t \ {\rm is \ smooth} \ \}.$$
 
Then the universal cover of $B^*$ admits a non constant holomorphic map into the Siegel space
$\sH_g$, which is biholomorphic to a bounded domain.

The conclusion is that, for $b=1$, there must be at least one  singular fibre, whereas for $b=0$ the number of singular fibres
must be at least $3$.

With the stronger hypothesis that the fibration is moduli stable, i.e., all singular  fibres have only nodes
as singularities and do not possess a smooth rational curve intersecting the other components in two points or less,
one gets a better estimate \cite{bea}, \cite{tan}, \cite{zamora}:

\begin{theo}
Let $f : S \ra B$ be a moduli stable fibration with $g \geq 1$. Then the number $s$ of singular fibres is at least:
\begin{enumerate}
\item
$ s \geq 4$ for $b=0, g\geq 1$,
\item
$ s \geq 5$ for $b=0, g\geq 2$,
\item
$ s \geq 6$ for $b=0, g\geq 3$,
\item
$ s \geq 2$ for $b=1, g = 2$.
\end{enumerate}
\end{theo}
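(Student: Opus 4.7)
The plan combines (i) constraints on the monodromy representation $\rho : \pi_1(B^*) \to \Sp(2g,\ZZ)$ on $H_1$ of a smooth fibre (and more finely on the geometric monodromy valued in the mapping class group $\sM ap_g$), where $B^* := B \setminus \{p_1,\dots,p_s\}$, with (ii) numerical Arakelov-- and slope--type inequalities for the relative dualizing sheaf $\omega_{S/B}$. By moduli stability and Picard--Lefschetz, each local monodromy $T_i$ at $p_i$ is a product of $\nu_i$ pairwise commuting Dehn twists along disjoint, homologically nontrivial vanishing cycles, where $\nu_i$ counts the nodes of $F_{p_i}$.

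For cases (1)--(3), with $b=0$, the relation $T_1\cdots T_s = \Id$ holds in $\Sp(2g,\ZZ)$. Non--isotriviality implies $\deg f_*\omega_{S/B}>0$, and the Arakelov--type upper bound $\deg f_*\omega_{S/B}\le \tfrac{g}{2}(s-2)$ already forces $s\ge 3$. To exclude $s=3$ I would follow Beauville, reducing $\rho$ modulo a small prime $\ell$: a relation of the form $T_1T_2T_3=\Id$ among products of transvections in the finite group $\Sp(2g,\FF_\ell)$ forces the union of vanishing cycles to span a common invariant subspace, contradicting the Zariski--density of $\im \rho$ in $\Sp$ that follows from non--isotriviality; this gives (1). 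The refinements (2) and (3) follow by coupling Xiao's slope inequality $\omega_{S/B}^2\ge \tfrac{4(g-1)}{g}\deg f_*\omega_{S/B}$ with the Noether--type identity $12\deg f_*\omega_{S/B}=\omega_{S/B}^2+\mu$ and the moduli--stable lower bound $\mu\ge s$; a short numerical rearrangement yields $s\ge 5$ for $g\ge 2$ (Tan) and $s\ge 6$ for $g\ge 3$ (Zamora).

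For (4), $b=1$, $g=2$, suppose towards contradiction that $s=1$. The presentation $\pi_1(B^*)=\langle a,b,\gamma\mid [a,b]=\gamma\rangle$ forces the geometric monodromy $T_1$ at the unique singular fibre to be a commutator in $\sM ap_2$. Moduli stability and the dual--graph count for stable curves of arithmetic genus $2$ force $\nu_1\in\{1,2,3\}$, and $T_1$ is accordingly a product of $\nu_1$ Dehn twists on disjoint, homologically independent vanishing cycles. The cases $\nu_1\in\{1,3\}$ are eliminated already at the level of $\Sp(4,\ZZ)$ using the classical isomorphism $\Sp(4,\FF_2)\cong S_6$: each symplectic transvection reduces to a transposition, so the product is an odd permutation and lies outside the commutator subgroup $A_6=[S_6,S_6]$. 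The main obstacle is the case $\nu_1=2$: here the mod--$2$ reduction is even and the product of two transvections \emph{is} indeed a commutator in $\Sp(4,\ZZ)$ (as the later sections of the paper show), so a purely symplectic argument cannot conclude. One must lift to $\sM ap_2$ and apply the theorem of \cite{sy} that a product of two disjoint Dehn twists is never a commutator in the mapping class group; this eliminates $\nu_1=2$ and yields $s\ge 2$.
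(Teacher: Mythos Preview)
The paper supplies no proof of this theorem: it is quoted as a known estimate with references \cite{bea}, \cite{tan}, \cite{zamora}, so there is nothing in the text to compare your sketch against. I therefore comment only on the internal soundness of your outline.

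For (1), your exclusion of $s=3$ rests on the claim that non--isotriviality forces the monodromy image to be Zariski dense in $\Sp(2g)$. This is false in general (a non--isotrivial family whose Jacobians split up to isogeny has reducible, hence non--dense, symplectic monodromy), and it is not how Beauville argues. For (2) and (3), the ingredients you list --- Xiao's slope inequality, the relative Noether identity, $\mu\ge s$, and the Arakelov bound $\deg f_*\omega_{S/B}\le\tfrac g2(s-2)$ --- combine only to give $s\ge 2+\tfrac{4}{8g+2}$, i.e.\ $s\ge 3$; Tan's and Zamora's proofs need genuinely further input, so ``a short numerical rearrangement'' does not suffice as written.

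The concrete gap in (4) is your premise that the vanishing cycles are ``homologically nontrivial'' and ``homologically independent''. A stable genus--$2$ curve can have a \emph{separating} node (two elliptic curves meeting at a point, or two nodal rational curves joined at a point), whose vanishing cycle is homologically trivial; and in the three--node theta configuration the cycles satisfy $[c_1]+[c_2]+[c_3]=0$. When a separating cycle is present its transvection is the identity in $\Sp(4,\ZZ)$, so your parity argument via $\Sp(4,\FF_2)\cong\mathfrak S_6$ collapses --- for the configurations $(m,n)=(0,1)$ or $(2,1)$ (non--separating, separating) the image in $\mathfrak S_6$ is even. The clean uniform fix, which also renders the appeal to \cite{sy} for $\nu_1=2$ unnecessary, is to use the abelianization $\sM ap_2^{\mathrm{ab}}\cong\ZZ/10$ directly (this is recalled in Section~4 of the paper): Matsumoto's congruence forces $m+2n\equiv 0\pmod{10}$ for any commutator, whereas a single stable genus--$2$ fibre has at most three nodes with at most one of them separating, so $1\le m+2n\le 4$. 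This disposes of all cases $\nu_1\in\{1,2,3\}$ at once.
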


For $b=1$ Ishida \cite{ishida} constructed a Catanese-Ciliberto surface with $g=3$, $K_S^2 = 3$, $p_g=q=1$
having only one singular fibre: but in this case the singular fibre is not a stable curve, it is isomorphic to
the union of $4$ lines in the plane passing through the same point (note that here the Milnor number is $9$, and that for 
a plane quartic curve the number of singular points is at most $6$, so that there is no stable curve with $g=3$
and with $9$ nodes).

Parshin \cite{parshin} claimed that for a moduli stable fibration with $b=1$ one should have $ s\geq 2$,
but the claim was contradicted by \cite{cmlp} who constructed an example with $s=1$ and
with reducible singular fibre.

In the next section, {{} using a variation of the method of \cite{cmlp},} we construct an example where there is only one singular fibre, irreducible and with $4$ nodes
(the number of nodes should be the smallest one, see remark \ref{CS}).

This example will play a role also in the later sections.

\section{A fibration over an elliptic curve with only one singular fibre,  irreducible and nodal}

\begin{theo}\label{product}
There exists  fibrations $f : S \ra B$, where $B$ is a smooth curve of genus $b=1$,
and  the fibres of $f$ are smooth curves of genus $g=9$, with the exception of  a unique singular fibre, which is
an irreducible nodal curve with $4$ nodes. Moreover, $S$ is the product $C_1 \times C_2$
of two smooth genus $2$ curves.

\end{theo}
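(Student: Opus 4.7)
The plan is to realize $f$ explicitly as a map
\[
\phi : S = C_1 \times C_2 \longrightarrow E, \qquad \phi(p,q) := \alpha(p) + \beta(q),
\]
into an auxiliary elliptic curve $E$, with $\alpha : C_1 \to E$ and $\beta : C_2 \to E$ two engineered degree-$4$ covers. The starting point is a numerical check: $e(S) = 4$, so Zeuthen--Segre forces $\mu = 4$ for any fibration with $b = 1$ and $g \ge 2$, and a single irreducible fibre with $4$ nodes contributes exactly this Milnor number.

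Construction. Fix an elliptic curve $E$ with two distinct non-zero $2$-torsion points $\eta', \eta'' \in E[2]$; these dually determine étale degree-$2$ isogenies $p' : E' \to E$ and $p'' : E'' \to E$, whose non-trivial deck transformations are translations by elements $\xi' \in \ker p' \subset E'[2]$ and $\xi'' \in \ker p'' \subset E''[2]$. Pick $b \in E'$ and $c \in E''$, and let $C_1 \to E'$ (resp.~$C_2 \to E''$) be the double cover branched at $\{b, b + \xi'\}$ (resp.~$\{c, c + \xi''\}$), chosen so that the induced pushforward on $H_1$ is surjective; each $C_i$ is a smooth genus-$2$ curve. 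Set $\alpha := p' \circ (C_1 \to E')$ and $\beta := p'' \circ (C_2 \to E'')$: these are degree-$4$ maps with two simple ramification points, which by deck-invariance of the chosen branch loci all lie over the single points $v_0 := p'(b) \in E$ and $w_0 := p''(c) \in E$ respectively.

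Verifications. (i) Riemann--Hurwitz applied to the degree-$4$ projection $\phi^{-1}(t) \to C_1$, which has $2 \cdot 4 = 8$ simple ramifications over the two branch points of $\beta$, yields $2g(F) - 2 = 4 \cdot 2 + 8 = 16$, so the generic fibre has genus $9$. (ii) The critical points of $\phi$ are the four pairs $(p_i, q_j)$ of ramifications of $\alpha, \beta$; near each, local coordinates give $\phi = \text{const} + u^2 + v^2$, a node. By construction all four critical values coincide at $v_0 + w_0$, so the fibre $F_0 := \phi^{-1}(v_0 + w_0)$ contains exactly four nodes, and Zeuthen--Segre rules out any further singular fibre. (iii) To see that $\phi$ has connected fibres, write $\phi_* = \alpha_* + \beta_*$ on $H_1(S) = H_1(C_1) \oplus H_1(C_2)$: the image of $\alpha_*$ equals $p'_*(H_1(E')) \subset H_1(E)$, the index-$2$ sublattice dual to $\eta'$, and analogously for $\beta_*$; since $\eta' \ne \eta''$ these two sublattices are distinct and span $H_1(E)$, so $\phi_*$ is surjective. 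We may then take $B := E$ and $f := \phi$.

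Main obstacle. The remaining delicate step is verifying that $F_0$ is irreducible. Each irreducible component $D \subset F_0$ must dominate both factors of $S$ (otherwise $\alpha$ or $\beta$ would be constant on a positive-dimensional subvariety), so combined with $F_0 \cdot F_i = 4$ the possible component bi-degrees are severely limited. Using the $(\ZZ/2)^2 \times (\ZZ/2)^2$-deck symmetry of $\phi$, which acts transitively on the four critical points, together with a specialisation argument showing that for a generic choice of the parameters $(b, c) \in E' \times E''$ the curves $C_1, C_2$ admit no correspondence of small bi-degree, one concludes that $F_0$ has a single irreducible component, of geometric genus $9 - 4 = 5$ as required.
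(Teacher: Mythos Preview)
Your overall framework---two degree-$4$ covers $\alpha,\beta$ of an elliptic curve, each with exactly two simple ramification points over a single branch value, combined via the addition map---is exactly the construction the paper uses. The genus, node, and fibre-connectedness computations are fine.

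The genuine gap is the irreducibility of the singular fibre $F_0$, which you yourself flag as the ``main obstacle'' but do not actually prove. Two specific problems:
\begin{itemize}
\item The claimed $(\ZZ/2)^2\times(\ZZ/2)^2$ deck symmetry of $\phi$ does not exist. A degree-$4$ cover of an elliptic curve branched at one point with local monodromy a double transposition cannot have monodromy group the Klein four-group (which is abelian, so the commutator $[\mu(\alpha),\mu(\beta)]$ would be trivial). Your $\alpha$ and $\beta$, built as towers of double covers, have monodromy group $D_4$; the deck group of each is only $\ZZ/2$ (the involution of $C_i\to E'$), and these involutions \emph{fix} the ramification points rather than permuting them.
\item The genericity argument fails in principle. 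The monodromy type of $F_0 = C_1\times_E C_2$ is topological and does not change as $(b,c)$ varies continuously; and since $C_1,C_2$ are specifically built to cover the same elliptic curve, they always carry correspondences through $E$. In fact, when both covers have monodromy $D_4$, whether the product action on $\{1,\dots,4\}^2$ is transitive depends on finer data (e.g.\ whether the image of a generator is a $4$-cycle or a double transposition), and for some choices consistent with your setup the fibre $F_0$ splits into two components of bidegree $(2,2)$.
\end{itemize}

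The paper handles this step differently and decisively: it specifies the two covers by explicit monodromy homomorphisms $\mu_1,\mu_2:\Gamma\to\mathfrak{S}_4$ with \emph{different} images, $D_4$ and $\mathfrak{A}_4$, and then checks by hand that the image of $\mu_1\times\mu_2$ in $\mathfrak{S}_4\times\mathfrak{S}_4$ acts transitively on the $16$ sheets. Using two non-isomorphic monodromy groups is what makes the transitivity verification clean. Your tower-of-double-covers construction can be made to work, but only after pinning down the monodromy precisely and carrying out the analogous transitivity check---which is the content you are missing.
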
 

\begin{proof}
We achieve the result in three steps.

{\bf Step 1:} we construct, for $i=1,2$, a degree $4$ covering $$f_i : C_i \ra B,$$ branched  only over $O \in B$,
and with $f_i^{-1} (O)$ consisting of two (necessarily simple) ramification points.

{\bf Step 2:} taking $O$ to be the neutral element of the group law
on the genus $1$ curve $B$, we set, as in \cite{cmlp},  
$$S: = C_1 \times C_2, \ {\rm  and  } \ f (x_1, x_2) : = f_1(x_1) -  f_2 (x_2).$$

Hence $x : = (x_1, x_2)$ is a critical point for $f$ if and only if both $x_1$ is a critical point for $f_1$ and
$x_2$ is a critical point for $f_2$.

By the choice made in step 1, we have $4$ such critical points, and for each  of them $f (x)=O$.
Hence $f$ has only one singular fibre $F_O : = f^{-1} (O)$, which possesses exactly  $4$ singular points.

By simple ramification, there is a local coordinate $t$ around $O$, and there are  
local coordinates $z_i$ around $x_i$ 
such that in these coordinates $ f_i(z_i) = z_i^2$. Therefore, at a critical point $x$, there are local coordinates $(z_1, z_2)$ such that
$$ f (z_1, z_2) = z_1^2 - z_2^2,$$ 
and we have a nodal singularity of the fibre $F_O = \{  f (z_1, z_2) = 0\}$.

{\bf Step 3:} we shall show, based on the explicit construction in step 1, that the singular fibre $F_O = f^{-1} (O)$,
which is the fibre product $C_1 \times_B C_2$,  is irreducible.

We observe moreover that the fibre $F_y$ over a point $ y \in B$ is the fibre product of $C_1$ and $C_2$ via the 
respective maps
$f_1 - y$ and $f_2$: hence there are exactly $ 2 \cdot  2 \cdot 4 = 16$ simple ramification points for
the map $F_y \ra B$, hence $ g (F_y) = 9$.

{\bf Construction of step 1:} We construct the two respective coverings $f_i : C_i \ra B$ using Riemann existence's theorem,
and we let $B$ be any elliptic curve, with a fixed point $O$ which we take as neutral element for the group law.

Since the local monodromy at the point $O$ is a double transposition, the monodromy $\mu_i : \pi_1(B \setminus\{O\})$
factors through the orbifold fundamental group
$$ \Ga : =  \pi_1^{orb}(B; 2O) : = \langle \al , \be, \ga |  [\al, \be] = \ga, \ga^2 = 1 \rangle,$$
where $\ga$ represents a simple loop around the point $O$.

We define then two homomorphisms,
$$\mu_1 : \Ga \ra \mathfrak S_4, \mu_1 (\al) : = (1,2,3,4),  \ \mu_1 (\be) : = (1,2) (3,4),$$
$$\mu_2 : \Ga \ra \mathfrak S_4, \mu_2 (\al) : = (1,2,3),  \ \mu_2 (\be) : = (1,4) (2, 3).$$

In both constructions $\mu_i (\be)$ is an element of the Klein group $\sK \cong (\ZZ/2)^2$,
consisting of the three double transpositions and of the identity, as well as $\mu_i (\al \be \al^{-1})= \mu_i (\ga \be).$
We have respectively 
$$\mu_1 (\al \be \al^{-1})= (2,3)(4,1), \ \   \mu_2 (\al \be \al^{-1})= (2,4)(1,3),$$
so that $\mu_1 (\al \be \al^{-1}) \neq \mu_1 ( \be )$ and $\mu_i (\ga)$ is the third nontrivial element in $\sK$, a double transposition as desired.

The conclusion is that $\mu_i(\Ga)$ contains the normal subgroup $\sK$ and is generated by $\sK$ and $\mu_i(\al)$.

Hence $\mu_1 (\Ga) $ is the dihedral group $D_4$, while $\mu_2 (\Ga) $ is the alternating group $\mathfrak A_4$.

We let then $f_i$ to be the degree $4$ covering associated to the monodromy $\mu_i : \Ga \ra \mathfrak S_4$.

{\bf Proof of the assertion of Step 3:} The normalization of the singular fibre of $f$, whch is the fibre product of $C_1$ and $C_2$
over $B$, is a degree $16$ covering of $B$ associated to the product monodromy 
$$\mu_1 \times \mu_2 : \Ga \ra  \mathfrak S_4 \times  \mathfrak S_4 \subset  \mathfrak S_{16}.$$

The irreducibility of this fibre product amounts therefore to the transitivity of the monodromy $ \mu (\Ga) : = \mu_1 \times \mu_2 (\Ga)$
on the product set $\{1,2,3,4\} \times \{1,2,3,4\}.$

Indeed, the $\mu (\al)$-orbit has cardinality $12$ and contains  $ \Sigma : = \{1,2,3,4\} \times \{1,2,3\}.$
Moreover, each element not in $\Sigma$ is of the form $(a,4)$ and $\mu (\be)$ sends $(a,4)$ to an element $(y, 1)$ which lies in
$\Sigma$, whence there is a unique orbit, the monodromy is transitive, and the unique singular fibre $F_O$ of $f$ is irreducible.

\end{proof}
\begin{remark}
Since we  took $B$ to be any elliptic curve, we see that our construction leads to a one-parameter family of
such fibrations. And since a deformation of a product of curves is again a product, we see that any deformation of $f$ which 
has exactly one singular fibre must be as in our construction.
\end{remark}

\section{Fibrations and factorizations in the mapping class group}

Let as usual now $f : S \ra B$ be a fibration of an algebraic surface onto a curve $B$ of genus $b$, such that the fibres 
$F_t$ of $f$ have genus $g$.

As before, we let $B^* $ be the complement of the $s$ critical values $p_1, \dots , p_s$ of $f$. We denote the $s$ singular fibres
by $f^{-1}(p_i) = : F_i$, and set $S^* : = f^{-1}(B^*) = S \setminus (F_1 \cup \dots F_s)$.

Then $f^* : S^* \ra B^*$ is a differentiable fibre bundle, and its monodromy defines homomorphisms
$$  \pi_1 (B^*, t_0) \ra \sM ap_g \ra Sp (2g, \ZZ),$$
where the second homomorphism corresponds geometrically to the bundle $\sJ^*$ of Jacobian varieties 
with fibres $J_t : = Jac(F_t) = Pic^0(F_t)$.

Here $\sM ap_g$ is the Mapping class group $\sD iff^+ (F_0) / \sD iff^0 (F_0)$, introduced by Dehn in \cite{dehn},
and we let
{{} $ \nu :  \pi_1 (B^*, t_0) \ra \sM ap_g$}  be the geometric monodromy.

Fixing a geometric basis, the fundamental group $ \pi_1 (B^*, t_0)$ is isomorphic to the group
$$ \pi_b(s) : = \langle  \al_1, \be_1, \dots \al_b, \be_b , \ga_1 ,\dots, \ga_s | \Pi_1^s \ga_i  \Pi_1^b  [\al_j, \be_j] = 1 \rangle  ,$$
and the image $\de_i : = \nu (\ga_i)$  is a conjugate of the local monodromy around $p_i$.

In the case where the only fibre singularity is a node, then $\de_i$ is a Dehn twist around the vanishing cycle,
a circle $c_i $, whose image in the Symplectic group is the Picard-Lefschetz transvection {{} associated
to the homology class $c$ of $c_i$:}
$$ T_c , \ T_c (v) : = v + (c,v) c, $$
(here $(c,v)$ denotes the intersection pairing on the base fibre $F_0$, a smooth curve of genus $g$).

It is customary to view the monodromy as a factorization
$$ \Pi_1^s \de_i  \Pi_1^b  [\al'_j, \be'_j] = 1    $$
in the Mapping class group (just let $\al_j' : = \nu(\al_j),  \be'_j : = \nu (\be_j)$).

Completing work of Moishezon \cite{moishezon} and  Kas \cite{kas}, Matsumoto  showed the following result
(theorems 2.6 and 2.4 of \cite{matsumoto}):

\begin{theo}\label{fact}
Given a factorization $ \Pi_1^s \de_i  \Pi_1^b  [\al'_j, \be'_j] = 1    $ in the mapping class group $\sM ap_g$, for $g\geq 1$,
there is a differentiable Lefschetz fibration $f : M \ra B$, whose monodromy corresponds to such a factorization,
if and only if the $\de_i$ are negative Dehn twists about an essential simple closed curve.

Moreover, two such fibrations are equivalent, for $g \geq 2$, if and only if the corresponding factorizations are equivalent, via change of a geometric basis in 
$\pi_1(B^*, b_0)$ and via simultaneous conjugation of all  the factors $\de_i, \al'_j, \be'_j$  by a fixed element $a \in \sM ap_g$. 
\end{theo}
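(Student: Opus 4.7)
The plan is to treat the two directions — existence of a Lefschetz fibration realizing a given factorization, and the factorization-to-equivalence bijection — via the standard clutching construction. The ``only if'' part of existence is immediate from the Picard--Lefschetz formula: the local monodromy around a nodal singular fibre is a Dehn twist about the vanishing cycle, which is necessarily essential since otherwise the singular fibre would have a disconnected component.

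For the ``if'' direction of existence, I would proceed by explicit construction. Given a geometric basis $\al_1, \be_1, \dots, \al_b, \be_b, \ga_1, \dots, \ga_s$ of $\pi_b(s) = \pi_1(B^*, t_0)$ together with a factorization satisfying the required relation, the data defines a homomorphism $\rho : \pi_1(B^*) \to \sM ap_g$. Via the flat-bundle construction, this produces a smooth $F_0$-bundle $E^* \to B^*$ with holonomy $\rho$. Over a disc neighborhood $\De_i$ of each $p_i$ I would glue in the local Lefschetz model $\{(z_1, z_2, t) : z_1^2 + z_2^2 = t\} \subset \C^2 \times \De$, whose monodromy around $\partial \De$ is, by Picard--Lefschetz, precisely a negative Dehn twist about the vanishing cycle $\{x_1^2 + x_2^2 = \text{const} > 0\}$. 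The compatibility of this gluing is where the hypothesis that each $\de_i$ is a negative Dehn twist about an essential simple closed curve enters: any two essential simple closed curves on $F_0$ are related by an element of $\sM ap_g$, providing the diffeomorphism needed to identify the boundary of the local model with $E^*|_{\partial \De_i}$.

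For the equivalence part ($g \geq 2$), one direction is tautological, so I would focus on the converse: if two factorizations differ by a change of geometric basis of $\pi_1(B^*)$ and simultaneous conjugation by a fixed $a \in \sM ap_g$, then the corresponding fibrations are smoothly equivalent. The construction above produces the bundle $E^* \to B^*$ from $\rho$ up to isomorphism, and changing a geometric basis corresponds to a self-diffeomorphism of $B$ permuting the $\ga_i$ through Hurwitz-type moves, while simultaneous conjugation corresponds to a different identification of the reference fibre $F_{t_0}$. The local Lefschetz neighborhoods over each $p_i$ are determined up to isotopy by the conjugacy class of $\de_i$, because for $g \geq 2$ the isotopy class of an essential simple closed curve is recovered from its Dehn twist (a consequence of the Dehn--Nielsen--Baer theorem and the faithfulness of the action of $\sM ap_g$ on isotopy classes of essential simple closed curves).

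The hardest step will be the uniqueness direction: explicitly producing a smooth diffeomorphism between two total spaces whose factorizations differ only by the allowed moves, by patching compatible diffeomorphisms on $E^*$ with local diffeomorphisms on each Lefschetz model and verifying smoothness across the gluing loci. The restriction $g \geq 2$ intervenes precisely here: for $g = 1$ the faithfulness input fails, and the hyperelliptic involution together with the kernel of $\sM ap_1 \to \SL(2, \ZZ)$ introduce extra fibration equivalences not accounted for by simple simultaneous conjugation, so the statement must be weakened in that case.
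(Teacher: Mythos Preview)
The paper does not prove this theorem at all: it is quoted as a result from the literature, attributed to Matsumoto (theorems 2.4 and 2.6 of \cite{matsumoto}), completing earlier work of Moishezon \cite{moishezon} and Kas \cite{kas}. There is nothing in the paper to compare your argument against.

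That said, your sketch is broadly the standard clutching argument one finds in those references, so you are on the right track if your goal is to reconstruct the proof. A few points deserve tightening. First, your justification that the vanishing cycle is essential (``otherwise the singular fibre would have a disconnected component'') is not quite the right reason; the point is rather that a Dehn twist about a nullhomotopic curve is isotopic to the identity, so the fibre would in fact be smooth and there would be no critical value at $p_i$. Second, in the equivalence part you call one direction ``tautological'', but showing that a fibre-preserving diffeomorphism of Lefschetz fibrations induces only the allowed moves on factorizations (Hurwitz moves plus global conjugation) is not automatic: one has to isotope the base diffeomorphism to track how the geometric basis is carried, and this is where Matsumoto's argument does real work. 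Third, your explanation of the $g\geq 2$ restriction is slightly off: $\sM ap_1 \to \SL(2,\ZZ)$ is an isomorphism, so there is no kernel; the genuine issue is that for $g=1$ the centre of $\sM ap_g$ is large and Dehn twists about distinct essential curves can coincide as mapping classes, so the curve is not recoverable from the twist.
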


In the above theorem a simple closed curve $c$  is said to be essential if it is not the boundary of a disk.
 There are two cases: if its  homology class {{} in $H_1(F_0, \ZZ)$} is non trivial (hence the complementary set is connected)  then  $c$  is said to be 
 nonseparating, or of type I. Else, the complementary set is disconnected, 
 the curve is said to be separating, or of type  II, and pinching the curve to a point one gets the union of two curves of 
 respective genera $h \geq 1$, $(g-h) \geq 1$,
meeting in a point.

\begin{rem}
Matsumoto takes the more restrictive definition in which $M$ is oriented, and that at the  critical points there
are  complex coordinates $z_1, z_2$ such that not only $F$ is locally given by $z_1 z_2$,
but also the complex orientation coincides with the global orientation.
One says then that the Lefschetz fibration is orientable.

Kas does not make this requirement, so there is no requirement imposed on the Dehn twists $\de_i$ 
occurring in the factorization.

An important question is whether a factorization comes from a holomorphic fibration: the case of fibre genus $g=2$
was treated by Siebert and Tian \cite{st}.

A similar question can be posed, requiring $M$ to be a symplectic 4-manifold, and that there is a local symplectomorphism
yielding the local complex coordinates $(z_1, z_2)$ (we take here  the standard symplectic structure on the target $\CC^2$). 
This question was however answered by Gompf \cite{gompf}, see also \cite{abkp}, who showed that any orientable
Lefschetz fibration comes from a symplectic Lefschetz fibration.

\end{rem}

Matsumoto showed, for $g=2$ orientable Lefschetz fibrations, that the number $m$ of singular fibres of type I, and the number 
$n$ of singular fibres of type $II$ satisfy the congruence

$$ m + 2n \equiv 0 \in \ZZ/10.$$

Indeed, the Abelianization of $\sM ap_2$ is isomorphic to $ \ZZ/10.$

We refer to \cite{sy} for more information about the minimal number of singular fibres for an orientable Lefschetz fibration
over a curve of genus $b$, the cases $b=0, 1$ being the open cases.   Stipsicz and Yun state 
that for $b=1$ the number $s$ of singular fibres is at least $3$. The bound would be sharp in genus $g=19$ because of the
Cartwright -Steger surface. Our example in theorem \ref{prr} shows that already in genus $g \geq 4$ we have a product of $4$
Dehn twists which is a commutator.

 In the case $b=1$, the existence of such a  factorization is equivalent to the assertion that a product of 
 $s$ Dehn twists is a commutator in the Mapping class group. 

In view of this, in the next section we focus on a related question, when is the product of certain transvections a commutator
in the $Sp (2g, \ZZ)$.

\section{Commutators in the Symplectic group $Sp (2g, \ZZ),\  g \leq 2$}

\subsection{The case $g=1$}

\medskip
As a warm up, let us begin with the case $g=1$, where $Sp (2, \ZZ) = SL(2, \ZZ)$.

In this case the group surjects to the group $\PP SL(2, \ZZ) $ of integral M\"obius transformation.
It is known, see for instance \cite{serre},  that $\PP SL(2, \ZZ) $ is the free product $(\ZZ/2)* (\ZZ/3)$, where  the first generator comes from the matrix $A $,
the second generator comes from the matrix $B$,
\begin{equation*}
A:=\left(\begin{matrix}
0&-1\cr 1&0
\end{matrix}\right), \ 
B:=\left(\begin{matrix}
0&1\cr -1&1
\end{matrix}\right).
\end{equation*}

We consider the standard transvection $T= T_{e_1}$, with matrix
 \begin{equation*}
T:=\left(\begin{matrix}
1&1\cr 0&1
\end{matrix}\right),
\end{equation*}
giving rise to the projectivity $ z \mapsto z+1$.

\begin{prop}

(i) One has $T^{-1}= AB$, hence the image of $T^{-1}$ in the Abelianization  $(\ZZ/2)\times  (\ZZ/3) \cong  (\ZZ/6)$
of $\PP SL(2, \ZZ) $ is equal to $(1,1) \equiv 1\in \ZZ/6$, and no power  $T^m$, for $ m $ not divisible by $6$, is a product of commutators.

(ii) $T^{2m}$ is a commutator in $GL(2, \ZZ)$ $\forall m$.

(iii) $T^{m}$ is  a product of commutators in $GL(2, \ZZ)$ only if $ m$ is even.

(iv) No power $T^m$, $m \neq 0$, is a commutator in $SL(2, \ZZ)$.

\end{prop}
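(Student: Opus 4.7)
The four parts require essentially independent arguments; I treat them in order.

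For \textbf{(i)}, I verify $AB = T^{-1}$ by direct matrix multiplication. Using the standard isomorphism $\PP SL(2,\ZZ) \cong \ZZ/2 * \ZZ/3$, in which the two free factors are generated respectively by the images of $A$ and $B$, the abelianization is $\ZZ/2 \times \ZZ/3 \cong \ZZ/6$ and $T^{-1}$ maps to $(1,1) \leftrightarrow 1$. Since any product of commutators lies in the kernel of the abelianization, $T^m$ can be a product of commutators in $\PP SL(2,\ZZ)$---and hence in $SL(2,\ZZ)$, by projection---only when $6 \mid m$.

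For \textbf{(ii)}, I take $J := \diag(1,-1) \in GL(2,\ZZ)$: a direct check gives $JTJ^{-1} = T^{-1}$, so
\[
[T^m, J] \;=\; T^m \cdot (JT^{-m}J^{-1}) \;=\; T^m \cdot T^m \;=\; T^{2m}.
\]
For \textbf{(iii)}, the goal is an abelian quotient of $GL(2,\ZZ)$ on which $T$ is non-trivial. Reduction modulo $2$ gives $GL(2,\ZZ) \twoheadrightarrow GL(2,\FF_2) \cong \mathfrak S_3$; the matrix $T \bmod 2$ fixes $e_1$ and swaps $e_2 \leftrightarrow e_1+e_2$, hence is a transposition. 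Composing with the sign character of $\mathfrak S_3$ yields a homomorphism $GL(2,\ZZ) \to \ZZ/2$ non-zero on $T$, so $T^m$ can lie in the commutator subgroup only if $m$ is even.

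\textbf{(iv)} is the main substance and the hardest step. A sharpening of (i) via the amalgamated-product presentation $SL(2,\ZZ) \cong \ZZ/4 *_{\ZZ/2} \ZZ/6$ shows that the abelianization of $SL(2,\ZZ)$ is $\ZZ/12$, with $T$ as a generator: thus $T^m$ being \emph{any} product of commutators already forces $12 \mid m$. To upgrade this to the single-commutator statement I invoke the Rademacher $\varphi$-function $\varphi\colon SL(2,\ZZ) \to \ZZ$ coming from the Dedekind $\eta$-multiplier system; its two relevant properties are $\varphi(T^m) = m$ for every $m$ (immediate from $\eta(\tau+m) = e^{m\pi i/12}\eta(\tau)$) together with the Rademacher cocycle identity $\varphi(g_1g_2) - \varphi(g_1) - \varphi(g_2) \in 3\cdot\{-1,0,1\}$, i.e.\ defect at most $3$. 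Passing to the homogenization $\overline\varphi$, which is a conjugation-invariant homogeneous quasimorphism, one has $|\overline\varphi([U,V])| \leq D(\overline\varphi)$ for every commutator, and the bound on the original defect ensures $D(\overline\varphi) < 12$. Combining $|m| = |\overline\varphi(T^m)| \leq D(\overline\varphi) < 12$ with $12 \mid m$ forces $m = 0$. The one delicate technical point is the sharp control of $D(\overline\varphi)$; any explicit bound strictly less than $12$ suffices, and such a bound follows directly from the cocycle identity.
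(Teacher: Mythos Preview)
Your arguments for (i), (ii), (iii) are essentially identical to the paper's: the same matrix identity $T^{-1}=AB$, the same conjugation by a reflection $\diag(\pm 1,\mp 1)$ for (ii), and the same reduction $GL(2,\ZZ)\to GL(2,\FF_2)\cong\mathfrak S_3$ for (iii).

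For (iv) you take a genuinely different route. The paper gives a five--line elementary argument: writing $T^m=X^{-1}YXY^{-1}$ as $X^{-1}YX=T^mY$ and comparing traces forces the $(2,1)$--entry of $Y$ to vanish, so $Y=\pm T^s$; then $T^sX=XT^{m+s}$ forces $Xe_1$ to be an eigenvector of $T^s$, hence $X$ is also upper triangular, and one reaches a contradiction. No machinery beyond $2\times 2$ linear algebra is used.

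Your approach via the Rademacher quasimorphism is a legitimate and well--known strategy, but as written it has a genuine gap at exactly the point you flag as ``delicate.'' The cocycle identity you quote gives $D(\varphi)=3$ for the \emph{inhomogeneous} Rademacher function. The bound you need is on the \emph{homogeneous} $\overline\varphi$, and the general inequality relating the two is only
\[
D(\overline\varphi)\;\le\;|\overline\varphi-\varphi|_\infty\cdot 3+D(\varphi)\;\le\;4\,D(\varphi)\;=\;12,
\]
which is \emph{not} strictly less than $12$. So ``such a bound follows directly from the cocycle identity'' is not correct: one must either show that $\Phi$ is already homogeneous on the relevant elements (it is not---e.g.\ $\Phi(B)\ne 0$ although $B$ has finite order), or compute $D(\overline\varphi)$ by a separate argument specific to the Rademacher function. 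That can be done, but it is additional work you have not supplied, and it is considerably heavier than the paper's direct computation.
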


\begin{proof}
An immediate calculation shows that  $T^{-1}= AB$, hence assertion (i) follows.

(ii) follows by taking
 \begin{equation*}
C:=\left(\begin{matrix}
-1&0\cr 0&1
\end{matrix}\right),
\end{equation*}
hence $$C T^m C^{-1} (T^m)^{-1} = T^{-2m}.$$

(iv) assuming that $$T^m = X^{-1} Y X Y^{-1}, \ m \in \ZZ, m \neq 0,$$
equivalently $$ Z : =  X^{-1} Y X = T^m Y.$$

Setting 
 \begin{equation*}
Y:=\left(\begin{matrix}
r&s\cr t&u
\end{matrix}\right),
\end{equation*}

we get 
 \begin{equation*}
Z:=\left(\begin{matrix}
r + m t&s + mu\cr t&u
\end{matrix}\right).
\end{equation*}
Since $Y$ and $Z$ are conjugate, they have the same trace, hence:
$$ r + m t + u = r + u \Rightarrow t=0. $$ 
Since $Y$ has determinant $=1$, we obtain $$ ru =1 \Rightarrow r= u= \pm 1,$$
and possibly replacing $Y$ with $-Y$ we obtain $$r=u=1\Rightarrow Y = T^s.$$

Whence, $$T^s X = X T^{m+s},$$
hence $ X (e_1)$ is an eigenvector for $T^s$; since $s\neq 0$, $X$ is also upper triangular,
hence a power of $T$ and we reach a contradiction.

(iii) we observe that  reduction modulo $2$ yields a projection $GL_2(\Z)\to GL_2(\Z/2\Z)\simeq \mathfrak{S}_3$ and that $T$ is sent by this projection to a transposition, hence an  odd permutation.

\end{proof}

The fact that the condition of being a commutator changes drastically, if one allows  orientation reversing  transformations,
occurs also in higher genus. For instance Szepietowski \cite{szep} proved:

\begin{theo}
Let $c$ be an essential closed circle in a compact complex curve $X$ of genus $g \geq 3$: then any power 
of the Dehn twist $\de_c$ is a commutator in the extended mapping class group 
$$\sM ap^e_g = 
\sD iff (X) / \sD iff^0(X).$$

\end{theo}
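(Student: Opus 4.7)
The plan is to exploit an orientation-reversing involution $\sigma$ of $X$ that preserves $c$ setwise and inverts $\delta_c$ under conjugation, i.e.\ $\sigma\delta_c\sigma^{-1}=\delta_c^{-1}$ in $\sM ap^e_g$. One can produce such $\sigma$ by a reflection whose ``axis'' crosses $c$ transversally in two points: cut $X$ along a tubular neighbourhood of $c$ and use the hypothesis $g\geq 3$ to construct an orientation-reversing involution of the cut surface acting symmetrically on the two boundary copies of $c$; in a tubular neighbourhood of $c$ modelled by $S^1\times[-1,1]$ this involution becomes $(\theta,t)\mapsto(-\theta,t)$. A direct local computation
$\sigma\delta_c\sigma^{-1}(\theta,t)=(\theta-\pi(1+t),t)=\delta_c^{-1}(\theta,t)$
then confirms the conjugation relation.

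Even powers follow immediately. Using $\sigma^{2}=1$ and the relation above,
\[
[\delta_c^{k},\sigma]\;=\;\delta_c^{k}\,\sigma\,\delta_c^{-k}\,\sigma^{-1}\;=\;\delta_c^{k}\cdot(\sigma\delta_c^{-k}\sigma^{-1})\;=\;\delta_c^{k}\cdot \delta_c^{k}\;=\;\delta_c^{2k}.
\]
For odd powers, the target is to exhibit an $h\in\sM ap^e_g$ that (i) commutes with $\delta_c$ and (ii) satisfies $[\sigma,h]=\delta_c$, i.e.\ $\sigma h\sigma^{-1}=\delta_c h$. Granting such $h$, one computes
\[
[\sigma,\,h\,\delta_c^{-k}]\;=\;(\sigma h\sigma^{-1})(\sigma\delta_c^{-k}\sigma^{-1})\,\delta_c^{k}h^{-1}\;=\;(\delta_c h)\,\delta_c^{k}\,\delta_c^{k}h^{-1}\;=\;\delta_c\cdot\delta_c^{2k}\;=\;\delta_c^{2k+1},
\]
using $h\delta_c=\delta_c h$ in the last step. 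Together with the even case this realizes $\delta_c^{n}$ as a single commutator for every $n\in\ZZ$ (taking inverses handles negative $n$ via $[A,B]^{-1}=[B,A]$).

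The main obstacle is thus the production of $h$. Note first that $\delta_c\sigma$ is itself an orientation-reversing involution, since
\[
(\delta_c\sigma)^{2}\;=\;\delta_c(\sigma\delta_c\sigma^{-1})\sigma^{2}\;=\;\delta_c\cdot\delta_c^{-1}\;=\;1.
\]
The requirement $[\sigma,h]=\delta_c$ says precisely that $h$ conjugates $\sigma$ to $\delta_c\sigma$; so one must show that $\sigma$ and $\delta_c\sigma$ are conjugate in $\sM ap^e_g$ by an element that can be chosen to preserve the isotopy class of $c$. Up to conjugacy in $\sM ap^e_g$, orientation-reversing involutions on a closed orientable surface of genus $g$ are classified by the topology of their fixed locus and the orientability of the quotient (the classical invariants of real structures on Riemann surfaces). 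For the particular $\sigma$ constructed above, $\Fix(\sigma)$ meets a tubular neighbourhood of $c$ in two transversal arcs, whereas $\Fix(\delta_c\sigma)$ meets it in two arcs that spiral once around $c$; these two local configurations are visibly isotopic, and one checks that the global invariants agree, so a conjugator $h$ exists. The delicate point, and the step where the hypothesis $g\geq 3$ enters genuinely, is to refine this $h$ to lie in the centralizer of $\delta_c$ (equivalently, to fix the isotopy class of $c$): the extra genus provides the topological room to perform the isotopy supporting the conjugation inside the complement of $c$. This refinement of the conjugator is the technical heart of Szepietowski's argument and is where I expect the main work to lie.
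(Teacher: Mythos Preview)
The paper does not actually prove this theorem: it is quoted as a result of Szepietowski \cite{szep} and no argument is given. So there is no ``paper's own proof'' to compare your proposal against.

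On its own merits: your treatment of even powers is correct and complete. The construction of an orientation-reversing $\sigma$ with $\sigma\delta_c\sigma^{-1}=\delta_c^{-1}$ is standard, and the identity $[\delta_c^{k},\sigma]=\delta_c^{2k}$ follows immediately. (Incidentally, this part does not require $g\geq 3$; such a $\sigma$ exists for any essential $c$ on any closed orientable surface of genus $\geq 1$.)

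The odd case, however, is not a proof but an outline with an explicitly acknowledged gap. You correctly reduce the problem to producing an $h$ that (i) commutes with $\delta_c$ and (ii) conjugates $\sigma$ to $\delta_c\sigma$, and you verify that $\delta_c\sigma$ is again an orientation-reversing involution. But then two substantive steps are left undone. First, the claim that $\sigma$ and $\delta_c\sigma$ have the same conjugacy invariants (number of fixed circles, orientability of the quotient) is asserted rather than checked; the fixed arcs of $\delta_c\sigma$ in the annulus connect the same boundary points as those of $\sigma$ but with an extra half-twist, and one must verify that this does not alter the global topology of the fixed set. Second, and more seriously, even granting conjugacy in $\sM ap^e_g$, you give no argument for why the conjugator can be chosen to preserve the isotopy class of $c$. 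You yourself flag this as ``the technical heart'' and say it is ``where I expect the main work to lie''; that is an honest admission that the proof is incomplete. The phrase ``the extra genus provides the topological room'' is suggestive but not an argument. If you want to pursue this route, one way forward is to work in the stabiliser of $c$ from the start: realise both $\sigma$ and $\delta_c\sigma$ concretely on a symmetric model of the cut surface and exhibit the conjugating homeomorphism by hand, supported away from $c$. Szepietowski's note is only four pages and would be worth consulting directly.
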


\subsection{The case $g=2$.}
We consider the lattice $\Z^4$ with its canonical basis $e_1,e_2,e_3,e_4$,
and define the symplectic form $(\cdot|\cdot)$ on $\Z^4$  by setting
\begin{equation*}
(e_1|e_2)=1= (e_3|e_4)
\end{equation*}
and 
\begin{equation*}
(e_i|e_j)=0,\qquad \mathrm{for} \{i,j\}\not\in\{\{1,2\},\{3,4\}\}.
\end{equation*}
The matrix of this symplectic form is then
\begin{equation*}
J_2:=\left(\begin{matrix}
0&1&0&0\cr -1&0&0&0\cr 0&0&0&1\cr 0&0&-1&0
\end{matrix}\right).
\end{equation*}
We denote by $ Sp (4, \ZZ)$ the corresponding symplectic group, i.e. the group of $4\times 4$ matrices $X$ with integral coefficients satisfying
\begin{equation*}
{}^t X\cdot J_2\cdot X= J_2.
\end{equation*}
Let now  $T\in\Sp_4(\Z)$ be the matrix 
\begin{equation*}
T=\left(\begin{matrix}
1&1&0&0\cr 0&1&0&0\cr 0&0&1&0\cr 0&0&0&1
\end{matrix}\right)
\end{equation*}
We prove in this subsection the following result

\begin{theorem}\label{T.uno}
Let $m\in\Z$ be an integer; the power $T^m$ of $T$ is a commutator in the group $Sp_4(4,\Z)$ if and only if $m$ is even. In particular, $T$ itself is not a commutator.
\end{theorem}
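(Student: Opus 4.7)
The proof naturally splits into two implications, and I would treat them separately.

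\medskip

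For the \emph{only if} direction, my strategy is to produce a homomorphism $\varepsilon\colon \Sp(4,\ZZ)\to\{\pm 1\}$ with $\varepsilon(T)=-1$: since commutators lie in $\ker\varepsilon$, this immediately forces $m$ to be even whenever $T^m$ is a commutator. The natural candidate is reduction modulo $2$ followed by the sign character of $\mathfrak S_6$, using the exceptional isomorphism $\Sp(4,\FF_2)\cong \mathfrak S_6$. The delicate point is that $\mathfrak S_6$ has \emph{two} conjugacy classes of involutions of size $15$ --- the transpositions (odd) and the triple transpositions (even) --- so I would verify that the $15$ symplectic transvections correspond to transpositions. This can be pinned down by the following order calculation: for $v,w\in\FF_2^4$ with $(v,w)=1$ a direct check gives $(T_v T_w)^3=\Id$, consistent with two transpositions sharing a letter but incompatible with two triple transpositions (whose pairwise products have order $2$). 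With $\varepsilon(T)=-1$ established, one concludes $\varepsilon(T^m)=(-1)^m$, and the only if direction follows.

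\medskip

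For the \emph{if} direction I would construct, for every $k\in\ZZ$, an explicit pair $X,Y\in \Sp(4,\ZZ)$ with $[X,Y]=T^{2k}$. The cleanest setup is to reorder the basis as $(e_1,e_3,e_2,e_4)$; in this basis the symplectic form acquires the standard polarised shape $\left(\begin{smallmatrix} 0 & I_2 \\ -I_2 & 0\end{smallmatrix}\right)$, and $T$ becomes the block matrix $\left(\begin{smallmatrix} I_2 & B \\ 0 & I_2 \end{smallmatrix}\right)$ with $B=\left(\begin{smallmatrix} 1 & 0 \\ 0 & 0\end{smallmatrix}\right)$. Set
\[
X=\begin{pmatrix} I_2 & B_0 \\ 0 & I_2 \end{pmatrix}, \qquad
Y=\begin{pmatrix} A & 0 \\ 0 & (A^T)^{-1}\end{pmatrix},
\]
with $B_0\in\Mat_2(\ZZ)$ symmetric and $A\in GL_2(\ZZ)$; both automatically lie in $\Sp(4,\ZZ)$. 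A routine block multiplication yields
\[
[X,Y]=\begin{pmatrix} I_2 & B_0-AB_0 A^T \\ 0 & I_2\end{pmatrix},
\]
so the task reduces to solving the matrix equation $B_0-AB_0 A^T=2kB$ with $B_0$ integer symmetric and $A\in GL_2(\ZZ)$. The choice $A=\left(\begin{smallmatrix} 1 & 1 \\ 0 & 1\end{smallmatrix}\right)$ together with $B_0=\left(\begin{smallmatrix} 0 & -k \\ -k & 0\end{smallmatrix}\right)$ meets this on the nose, and completes the construction.

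\medskip

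The main obstacle is really the first direction: evenness of $m$ is not automatic, and the delicate step is singling out the correct $\ZZ/2$-invariant and verifying that $T$ is sent to an \emph{odd} permutation in $\mathfrak S_6$. The sufficiency direction, by contrast, collapses to a one-line linear-algebra identity in $2\times 2$ matrices after the change of basis, and the same block ansatz should assist in the higher-genus generalisations announced in the introduction.
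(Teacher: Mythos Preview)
Your construction for the ``if'' direction is exactly the paper's: same reordering of the basis to $(e_1,e_3,e_2,e_4)$, same block ansatz, same choice $A=\left(\begin{smallmatrix}1&1\\0&1\end{smallmatrix}\right)$ together with the off-diagonal symmetric matrix; only the roles of $X$ and $Y$ are swapped.

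For ``only if'' the paper actually gives two arguments. Its \emph{primary} proof works directly in $\Sp(4,\F_2)$ without invoking $\mathfrak S_6$: a structural proposition shows that whenever $T^m=[X,Y]$ (with $m$ odd) the orbit of $e_1$ under $\langle X,Y\rangle$ spans a subspace of dimension at most $2$ inside $e_1^\perp$, and each of the two resulting cases is then eliminated by an explicit matrix computation. Your route via reduction mod $2$ and the isomorphism $\Sp(4,\F_2)\cong\mathfrak S_6$ is the paper's \emph{alternative} proof; it is shorter and in fact gives the stronger statement that $T$ lies outside the derived subgroup of $\Sp(4,\Z)$.

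There is, however, a slip in your execution. A triple transposition $(12)(34)(56)$ is a product of three transpositions and is therefore \emph{odd}, not even. Thus both size-$15$ conjugacy classes of involutions in $\mathfrak S_6$ consist of odd permutations, and your ``delicate point'' dissolves: once you observe that the $15$ symplectic transvections form a single conjugacy class (immediate from the transitivity of $\Sp(4,\F_2)$ on nonzero vectors), $\varepsilon(T)=-1$ follows with nothing further to check. Your order-$3$ argument could never have separated the two classes anyway, since the outer automorphism of $\mathfrak S_6$ exchanges them while preserving all group-theoretic relations; in particular products of triple transpositions of order $3$ certainly exist, for instance $(12)(34)(56)\cdot(16)(23)(45)=(153)(246)$. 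For the record, the paper computes that under its explicit isomorphism $T$ corresponds to a \emph{triple} transposition, not a transposition.
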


 One direction of the equivalence will follow from reduction modulo 2; we shall prove  the following result
 
 \begin{theorem}\label{T.due}
 The matrix $T$ is not a commutator in the group $Sp(4,\F_2)$.
 \end{theorem}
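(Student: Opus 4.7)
The plan is to exhibit a non-trivial homomorphism $\varepsilon\colon Sp(4,\F_2) \to \ZZ/2$ with $\varepsilon(T)=1$. Since any commutator is killed in every abelian quotient, this will imply that $T$ is not a commutator.

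The homomorphism will come from the action of $Sp(4,\F_2)$ on the set $\Theta$ of quadratic refinements of the symplectic form, i.e.\ functions $q\colon \F_2^4 \to \F_2$ satisfying $q(u+v)=q(u)+q(v)+(u,v)$. Each such $q$ is determined by its values on the symplectic basis $e_1,\dots,e_4$, so $|\Theta|=16$. The Arf invariant $q\mapsto q(e_1)q(e_2)+q(e_3)q(e_4)$ is $Sp$-invariant and splits $\Theta$ into two orbits: the ten even forms and the six odd forms $\Theta^{-}$. The induced permutation representation gives $\rho\colon Sp(4,\F_2)\to \mathfrak S_6$ (which, as is classical, is an isomorphism), and I will set $\varepsilon:=\mathrm{sgn}\circ\rho$.

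Next I will compute $\varepsilon(T)$ directly from the action $q\mapsto q\circ T$. Using $T(e_1)=e_1$, $T(e_2)=e_1+e_2$, $T(e_3)=e_3$, $T(e_4)=e_4$, one gets $(q\circ T)(e_i)=q(e_i)$ for $i\neq 2$ and $(q\circ T)(e_2)=q(e_1)+q(e_2)+1$, so $T$ fixes $q$ if and only if $q(e_1)=1$. A brief enumeration of the six odd forms by the value of $q(e_1)$ shows that four of them have $q(e_1)=1$ (hence are fixed by $T$), while the remaining two are those with $q(e_1)=0$, $q(e_3)=q(e_4)=1$ and $q(e_2)\in\{0,1\}$; since $T$ is an involution, these two are swapped. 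Thus $T$ acts on $\Theta^{-}$ as a single transposition, whose sign is $-1$, and we obtain $\varepsilon(T)=1$.

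There is no real obstacle here: the whole argument reduces to the short combinatorial bookkeeping that the orbit structure of $T$ on $\Theta^{-}$ is one $2$-cycle and four fixed points. An equivalent (and perhaps more concise) route would be to invoke at the outset the classical isomorphism $Sp(4,\F_2)\cong\mathfrak S_6$ together with the fact that under it every symplectic transvection corresponds to a transposition; the sign character of $\mathfrak S_6$ then detects $T$ at once.
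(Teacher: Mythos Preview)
Your proof is correct. In spirit it coincides with the paper's \emph{alternative} proof (Section~6), which also detects $T$ via the sign character of $\mathfrak S_6$ under the classical isomorphism $Sp(4,\F_2)\cong\mathfrak S_6$. The difference is only in the realization of the six-element set: the paper builds the $\mathfrak S_6$-action from the hyperplane $H\subset\F_2^6$ modulo the all-ones vector and then argues indirectly (showing that $T$, being of order two, is not conjugate to the matrix arising from $(1,2)(3,4)$, hence must correspond to an odd involution), whereas you use the action on the six odd quadratic refinements and compute the cycle type of $T$ directly as a single transposition. Your route is a bit more economical, and---like the paper's alternative proof---it in fact establishes the stronger Theorem~\ref{T.tre} (that $T$ lies outside the full commutator subgroup), not merely Theorem~\ref{T.due}. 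Note, incidentally, that you do not actually need $\rho$ to be an isomorphism: any homomorphism $Sp(4,\F_2)\to\mathfrak S_6$ suffices once you have checked $\mathrm{sgn}(\rho(T))=-1$.

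By contrast, the paper's \emph{primary} proof of Theorem~\ref{T.due} is entirely different: it proceeds by a direct matrix analysis, first constraining (via Proposition~\ref{T.proposition}) the orbit of $e_1$ under the hypothetical $X,Y$, and then ruling out each case by explicit computations in $Sp(4,\F_2)$.
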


The reverse implication in Theorem \ref{T.uno} shall result from a simple explicit construction given below. Note that as a consequence of this latter implication,  every power of $T$ is a commutator in $Sp_4(4,\F_p)$ for every odd prime $p$.\medskip

\bigskip

{\bf Notation}. For a vector space $V$ and vectors $v_1,\ldots,v_k\in V$, we denote by $<v_1,\ldots,v_k>$ the sub-vector space generated by $v_1,\ldots,v_k$. When $\Lambda$ is a lattice (or a $\Z$-module), and $v_1,\ldots,v_k$ are $k$ elements of $\Lambda$, we denote by the same symbol $<v_1,\ldots,v_k>$ the $\Z$-module generated by $v_1,\ldots,v_k$, when no confusion can arise.

For a vector $v\in \Z^4$ (or more  generally  in a module  with symplectic form $(\cdot|\cdot)$), the symbol $v^\perp$ denotes its orthogonal with respect to the given symplectic form.

\medskip

We begin by proving Theorem \ref{T.due}.  
We start with the following Proposition, which in fact holds in arbitrary characteristic  not dividing the integer $m$ appearing in the statement, and is one of the main tool in the proof of the Theorem.  

\begin{proposition}\label{T.proposition}
Let $m\neq 0$ be an integer and suppose that  
\begin{equation}\label{E.Commutatore}
T^m=XYX^{-1}Y^{-1}
\end{equation}
 for two matrices $X,Y\in Sp(4,\Z)$. Then either $e_1$ is an eigenvector for both $X$ and $Y$, or the orbit of $e_1$ under the subgroup generated by $X$ and $Y$ is contained in  a two dimensional  sub-lattice of $\Z^4$, contained in $e_1^\perp$ and invariant under  $X$ and $Y$.  
\end{proposition}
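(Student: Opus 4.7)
My plan is to analyze $V:=\mathrm{span}_{\Q}(\langle X,Y\rangle\cdot e_1)\subset\Q^4$ and the corresponding $\langle X,Y\rangle$-invariant sublattice $L:=\mathrm{span}_{\Z}(\langle X,Y\rangle\cdot e_1)\subset\Z^4$ of rank $d:=\dim V$, and to show that either $d=1$ (yielding the first alternative, with $e_1$ a common eigenvector) or $d=2$ with $V\subset e_1^\perp$ (yielding the second alternative, with $L$ the sought rank-$2$ sublattice of $e_1^\perp$).

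First, I would rewrite $T^m=I+m\,e_1\otimes e_1^\vee$, where $e_1^\vee:=(e_1|\cdot)$, and multiply the relation $T^m=XYX^{-1}Y^{-1}$ on the right by $YX$ to obtain $XY=T^mYX$, whence
$$
XY-YX \;=\; m\,e_1\otimes u^\vee,\qquad u:=X^{-1}Y^{-1}e_1,\quad u^\vee:=(u|\cdot).
$$
The identity $T^me_1=e_1$ additionally yields $Y^{-1}X^{-1}e_1=X^{-1}Y^{-1}e_1=u$. Taking the trace of the rank-one operator $XY-YX$---which vanishes identically---gives $m(u|e_1)=0$, so $(u|e_1)=0$ and $XYe_1=YXe_1$.

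Next I would distinguish cases according to whether $V\subset e_1^\perp$. If $V\subset e_1^\perp$, then $d\le 2$: indeed, if $d=3$ then $V=e_1^\perp$, and since $X,Y\in Sp(4,\Z)$ preserve $V$ and the symplectic form, they also preserve $V^\perp=(e_1^\perp)^\perp=\langle e_1\rangle$, making $e_1$ a common eigenvector and forcing the orbit into $\langle e_1\rangle$---contradicting $d=3$. If instead $V\not\subset e_1^\perp$ and $d\in\{2,3\}$, the radical $V\cap V^\perp$ of the restricted form has even corank in $V$, so $V/(V\cap V^\perp)$ is a nondegenerate symplectic plane and the integral lattice $L/(L\cap V^\perp)$ carries an action of $\langle X,Y\rangle$ through $SL_2(\Z)$. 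The descended commutator is the restriction $T^m|_{V/(V\cap V^\perp)}$, a nontrivial transvection---because $V\not\subset e_1^\perp$ forces $e_1^\vee$ to descend to a nonzero functional---contradicting the $g=1$ result of Section~5.1, which says that no nontrivial transvection in $SL_2(\Z)$ is a commutator.

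This leaves the case $d=4$, where $V=\Q^4$ and $V^\perp=0$, so that the symplectic quotient trick of the preceding step collapses; this is the main obstacle. I would attempt to eliminate it either by producing a common eigenline $\langle w\rangle\subset\Q^4$ of $X$ and $Y$ distinct from $\langle e_1\rangle$ and reducing to the $d=3$ case via the quotient $\Q^4/\langle w\rangle$, or by combining the symmetric companion identity $X^{-1}Y^{-1}-Y^{-1}X^{-1}=m\,u\otimes e_1^\vee$ with trace computations $m(u|We_1)=\mathrm{tr}((XY-YX)W)$ for a sufficiently rich family of words $W\in\langle X,Y\rangle$, aiming to force the nonzero vector $u$ into the radical $V\cap V^\perp=\{0\}$ and obtain a contradiction. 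This last step is where I expect the real difficulty to lie.
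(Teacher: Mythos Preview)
Your setup and the cases $d\le 3$ are sound. In particular, your symplectic-reduction argument for $V\not\subset e_1^\perp$ with $d\in\{2,3\}$ is correct: the quotient $L/(L\cap V^\perp)$ is a rank-two lattice (the intersection with a rational subspace is saturated), $X,Y$ act on it through $SL_2(\Z)$ because they preserve the induced nondegenerate alternating form, and the descended $T^m$ is a nontrivial unipotent, hence $GL_2(\Z)$-conjugate to some $T^k$ with $k\neq 0$, contradicting the $g=1$ result. The case $V\subset e_1^\perp$ with $d=3$ is also handled correctly.

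The genuine gap is exactly where you locate it: the case $d=4$. Your two suggested attacks do not close it. Producing a common eigenline $\langle w\rangle$ distinct from $\langle e_1\rangle$ would not reduce you to a $d=3$ situation for the proposition as stated (the orbit of $e_1$ in the quotient $\Q^4/\langle w\rangle$ is not the object you need to control), and the trace identities you have so far---essentially $(u\,|\,e_1)=0$, giving $YXe_1,\,XYe_1\in e_1^\perp$---are not nearly enough to force $u\in V^\perp=\{0\}$.

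The paper avoids the $d=4$ obstruction entirely by \emph{not} working with the full $\langle X,Y\rangle$-orbit. Instead it studies the two cyclic orbits $W_X:=\mathrm{span}\{X^n e_1:n\in\Z\}$ and $W_Y:=\mathrm{span}\{Y^n e_1:n\in\Z\}$ separately. The key device you are missing is the substitution trick: the commutator $XYX^{-1}Y^{-1}$ is unchanged if one replaces $X$ by $XY^k$ or $Y$ by $YX^k$. Running your trace argument after each such substitution yields the much richer family of relations
\[
X^n e_1,\; Y^n e_1,\; XY^n e_1,\; YX^n e_1 \in e_1^\perp \qquad (n\in\Z),
\]
together with $X^{-1}Y^{-1}v=Y^{-1}X^{-1}v$ for all $v\in e_1^\perp$. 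From $YX^n e_1\in e_1^\perp$ one gets $W_X\subset (Y^{-1}e_1)^\perp$, so $\dim W_X\le 3$; and $\dim W_X=3$ is impossible because then $W_X=e_1^\perp$ would be $X$-invariant, forcing $e_1$ to be an $X$-eigenvector. Hence $\dim W_X\le 2$, and likewise $\dim W_Y\le 2$. A short case analysis then shows that either both are lines (first alternative) or $W_X=W_Y$ is a common invariant plane inside $e_1^\perp$ (second alternative). This bounds $d\le 2$ directly and never confronts the $d=4$ case at all.

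So: your reduction-to-$SL_2(\Z)$ idea is an elegant alternative for the intermediate cases, but the proof as written is incomplete. To finish along the paper's lines, replace the analysis of the full orbit $V$ by the cyclic subspaces $W_X,W_Y$, and exploit the invariance of the commutator under $X\mapsto XY^k$, $Y\mapsto YX^k$ to generate the additional orthogonality relations.
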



\noindent{\tt Remarks}: (1) The above proposition could be extended in higher dimensions: for the analogue in dimension $2g$, the result would be that the orbit of $e_1$ under the group generated by $X$ and $Y$ would be contained in an invariant  subgroup $\Lambda\subset\Z^{2g}$,  satisfying $\Lambda\subset\Lambda^{\perp}\subset e_1^\perp$. (2) We have stated the proposition over the integers, but we could have worked over any field (of characteristic not dividing $m$); in that case we would speak of sub-vector spaces instead of sub-lattices. 

\begin{proof}
Let us put $\Delta:T-I$, where $I=I_4$ is the identity matrix. Note that $\Delta^2=0$ and that $\Delta v=0$ for each $v\in e_1^\perp$.
Also, $T^n=I+n\Delta$, for all $n\in\Z$.

We note the useful equality
\begin{equation*}
e_1^\perp=<e_1,e_3,e_4>=\ker \Delta
\end{equation*}
(where we identify the matrix $\Delta$ with the multiplication-by-$\Delta$ endomorphism of $\Z^4$).
We shall also keep in mind that $\Delta\cdot\Z^4=<e_1>$.
\smallskip

We can rewrite equation \eqref{E.Commutatore} in the form
\begin{equation}\label{E.CommDelta1}
XY-YX=m\Delta YX,
\end{equation}
as well as
\begin{equation}\label{E.CommDelta2}
X^{-1}Y^{-1}- Y^{-1}X^{-1}= m Y^{-1}X^{-1}\Delta.
\end{equation}
It immediately follows from the first of the two identities that $\mathrm{Tr}\Delta YX=0$, which means precisely that the coefficient on the first column - second row of $YX$ vanishes. This property  can be stated as 
\begin{equation*}
YXe_1\in e_1^\perp.
\end{equation*}
Also, interchanging $X,Y$ turns their commutator into its inverse $T^{-m}=I-m\Delta$ and repeating the argument we also obtain
\begin{equation*}
YXe_1\in e_1^\perp.
\end{equation*}

Again using equation \eqref{E.Commutatore} one gets
\begin{equation*}
XYX^{-1}-Y=m\Delta Y
\end{equation*}
and, noting that interchanging  $X,Y$ turns their commutator $T^m$ into $T^{-m}$,
\begin{equation*}
YXY^{-1}-X=-m\Delta X.
\end{equation*}
From these relations we obtain as above that $\mathrm{Tr}\Delta X= \mathrm{Tr}\Delta Y=0$, i.e. $Xe_1\in e_1^\perp,\, Ye_1\in e_1^\perp.$
Summarizing we have
\begin{equation*}
Xe_1,Ye_1,XYe_1,YXe_1\in e_1^\perp.
\end{equation*}
We now notice that the commutator $XYX^{-1}Y^{-1}$ does not change if we replace $X$ by $XZ$, where $Z$ commutes with $Y$, or $Y$ by $YZ$, where $Z$ commutes with $X$; hence taking for $Z$ any power of $Y$ in the first case and any power of $X$ in the second case, we also obtain that for each $n\in\Z$,
\begin{equation}\label{E.inclusioni1}
 XY^n e_1, \, YX^n e_1   \in e_1^\perp.
\end{equation}

The  identity \eqref{E.CommDelta2} implies, considering that $\ker\Delta=e_1^\perp$, that 
\begin{equation*}
X^{-1}Y^{-1} v= Y^{-1}X^{-1} v
\end{equation*}
 for each $v\in e_1^\perp$.

\bigskip
Also, observing that all monomials in $X,Y$ are symplectic matrices, and that for every symplectic matrix $F$ the relation $(Fe_1| e_1)=0$ implies $(F^{-1}e_1|e_1)=0$, we also obtain that for all $n\in\Z$:
\begin{equation}\label{E.inclusioni2}
X^nY^{-1} e_1, \,  Y^n X^{-1} e_1   \in e_1^\perp.
\end{equation}

We now pause to prove the following 

{\tt Claim}: {\it The orbit of $e_1$ under $X$ (resp. under $Y$) is contained in a proper sub-vector space of $\Q^4$.}

{\tt Proof of the Claim}. This follows from the relations $YX^n e_1\in e_1^\perp$, included in \eqref{E.inclusioni1} and valid for all $n\in \Z$,  which imply that  the orbit of $e_1$ under $X$ is included in the hyperplane $Y^{-1} (e_1^\perp)=(Y^{-1}e_1)^\perp$. Of course, the relations $XY^n e_1\in e_1^\perp$ imply the same conclusion for the orbit under $Y$. This proves our claim.

\smallskip

We now prove that such sub-spaces must be one or two-dimensional:

{\tt Claim}: {\it The vector space generated by the orbit of $e_1$ under $X$ (resp. under $Y$) cannot be three-dimensional.}

{\tt Proof of the Claim}. Suppose by contradiction that such a vector space has dimension $3$. Then it admits the base  $(X^{-1}e_1, e_1, Xe_1)$. Since  the three vectors $X^{-1}e_1, e_1, Xe_1 $ all belong to $e_1^\perp$, as we have seen in \eqref{E.inclusioni1}, \eqref{E.inclusioni2}, this vector space must coincide with $e_1^\perp$, which then is an invariant subspace for $X$. But if  a symplectic operator leaves invariant a subspace, it also leaves invariant its orthogonal, so in this case the line $<e_1>$  would be $X$ -invariant, contrary to our assumption  that $e_1,Xe_1, X^{-1}e_1$ are linearly independent.
\medskip

Then only three cases must be considered for the proof of the proposition:\smallskip

(1) The two orbits of $e_1$ (under $X$ and $Y$) are contained in a line; this line is then $<e_1>$ and in this case the assertion of the proposition is plainly verified.\smallskip

(2) The vector $e_1$ is an eigenvector for $X$ and its orbit under $Y$ is contained in a plane $W$; in this case we must  show that $W$ is contained in $e_1^\perp$ and that it is $X$-invariant. Of course, the symmetric situation, when $e_1$ is an eigenvector only of $Y$, is treated in exactly the same way. \smallskip

(3) The two orbits generate planes $W_X,W_Y$. In this case we must show that $W_X=W_Y$ and that this common plane is contained in $e_1^\perp$.\smallskip

\smallskip

Let us consider now the second case: $Xe_1=\pm e_1$ and the orbit of $e_1$ under $Y$ generates a plane $W=<e_1,Ye_1>=<e_1,Y^{-1}e_1>$. Since $X^{-1}Y^{-1}$ and $Y^{-1}X^{-1}$ coincide in $e_1$ and $Xe_1=\pm e_1$, we   have $X^{-1}Y^{-1}e_1=\pm Y^{-1}e_1$, so both $e_1, Y^{-1}e_1$ are eigenvectors for $X$, so $W$ is $X$-invariant. Since $Y^{-1}e_1\in e_1^\perp$, the inclusion $W\subset e_1^\perp$ holds, and  the verification of the proposition in this case is complete.
\smallskip

In the last case to examine, let 
\begin{equation*}
W_X=<e_1,Xe_1>=<e_1,X^{-1}e_1>, \qquad  W_Y=<e_1,Y e_1> = <e_1,Y^{-1}e_1>
\end{equation*}
and again note that $W_X\subset e_1^\perp, W_Y\subset e_1^\perp$. If $W_X\neq W_Y$, then the subspace generated by $W_X$ and $W_Y$ would coincide with the hyperplane $e_1^\perp$  and would be generated by $e_1, Xe_1, Ye_1$. Now,  since $XYe_1\in e_1^\perp$, we would obtain that $e_1^\perp$ is $X$-invariant, so again $e_1$ would be an eigenvector for $X$, contrary to our assumptions. So we cannot have $W_X\neq W_Y$ and the proposition is proved in this last case too.

\end{proof}

Thanks to Proposition \ref{T.proposition}, we can divide the proof of Theorem \ref{T.due} into two cases, according to the orbit of $e_1$ under $X,Y$ being a line or a plane.

\medskip

\noindent{\tt First case: $e_1$ is an eigenvector for both $X$ and $Y$}.
Note that in this case the three dimensional vector space $e_1^\perp=<e_1,e_2,e_3>$ is also invariant. 
Also, since the rational eigenvalues of $X,Y$ must be $\pm 1$, we can suppose that $Xe_1=e_1=Ye_1$ (we can change sign to $X$ and to $Y$ without changing their commutator $XYX^{-1}Y^{-1}$). This fact, plus the fact that   $e_1^\perp$ is invariant under $X$ and $Y$ means that the two matrices are of the form
\begin{equation}\label{E.FormaMatrici}
\left(\begin{matrix}1&*&*&*\cr 0&1&0&0\cr 0&*&*&*\cr 0&*&*&*
\end{matrix}\right)
\end{equation}

Put $\Delta=T-I$, where $I=I_4$ is the identity matrix. Recall that $\Delta$ is nilpotent, satisfying $\Delta^2=0$ 
which implies that for every $m\in\Z$,
\begin{equation*}
T^m=I+m\Delta
\end{equation*}

 We take a break  to prove the following

\begin{lemma}\label{T.Delta}
Under the assumption that $Xe_1=Ye_1=e_1$, we have
\begin{equation*}
X\Delta=Y\Delta=\Delta\qquad \mathrm{and} \, \Delta X=\pm \Delta,\, \Delta Y=\pm\Delta.
\end{equation*}
\end{lemma}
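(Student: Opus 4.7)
The plan is to find a coordinate-free description of $\Delta = T - I$ that interacts cleanly with symplectic matrices fixing $e_1$. A direct inspection of the matrix of $T$ shows that $\Delta$ has a unique nonzero entry, a $1$ at position $(1,2)$; equivalently, as an endomorphism of $\ZZ^4$,
\begin{equation*}
\Delta(v) = -(v\mid e_1)\, e_1 \qquad \text{for every } v\in\ZZ^4,
\end{equation*}
since $(e_j\mid e_1)$ equals $-1$ for $j=2$ and $0$ for $j=1,3,4$. This single identity reduces the lemma to two lines of symplectic bookkeeping, and in particular recovers the already-noted facts that $\mathrm{Im}(\Delta) = \langle e_1\rangle$ and $\ker(\Delta) = e_1^\perp$.

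For the equality $X\Delta = \Delta$, apply $X$ to the formula: $X\Delta(v) = -(v\mid e_1)\,Xe_1 = -(v\mid e_1)\,e_1 = \Delta(v)$, where the middle step uses only the hypothesis $Xe_1 = e_1$. The same argument yields $Y\Delta = \Delta$. For $\Delta X$, compute $\Delta X(v) = -(Xv\mid e_1)\,e_1$ and use that $X$ preserves the symplectic form to rewrite $(Xv\mid e_1) = (Xv\mid Xe_1) = (v\mid e_1)$, giving $\Delta X = \Delta$; the $\pm$ in the statement accounts for the alternative normalization $Xe_1 = -e_1$, under which the same calculation produces a sign flip on both $X\Delta$ and $\Delta X$. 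The analogous formulas for $Y$ follow identically.

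No genuine obstacle arises: the whole lemma is a mechanical consequence of expressing $\Delta$ invariantly in terms of the symplectic pairing and then using that $X$ and $Y$ each fix the line $\langle e_1\rangle$ and preserve the form. The content is really that $\Delta$ is the (essentially unique) rank-one symplectic nilpotent supported on $\langle e_1\rangle \times \langle e_1\rangle$, and hence is fixed by left- and right-multiplication by the stabilizer of $e_1$ in $Sp(4,\ZZ)$.
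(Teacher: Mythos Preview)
Your proof is correct and in fact sharper than the paper's: under the stated hypothesis $Xe_1=e_1$ (with $X$ symplectic) your computation gives $\Delta X=\Delta$ on the nose, so the $\pm$ in the statement is superfluous. Your invariant formula $\Delta(v)=-(v\mid e_1)\,e_1$ is the key observation, and once written down the lemma is immediate from $(Xv\mid Xe_1)=(v\mid e_1)$.

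The paper takes a different route for the second equality. Rather than computing $\Delta X$ directly, it observes that $\Delta$ and $\Delta X$ are rank-one endomorphisms with the same kernel $e_1^\perp$ and the same image $\langle e_1\rangle$, hence $\Delta X=\lambda\Delta$ for some nonzero integer $\lambda$; it then rules out $|\lambda|>1$ by reducing modulo any prime $p\mid\lambda$ and noting that $\Delta X\equiv 0\pmod p$ is impossible since $X$ is invertible and $\Delta$ has rank one over $\F_p$. This argument is more indirect and only pins $\lambda$ down to $\pm1$, whereas your approach exploits the symplectic identity $(Xv\mid e_1)=(v\mid e_1)$ to fix the sign. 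The paper's method has the minor advantage of not needing the explicit formula for $\Delta$, but your argument is cleaner and explains the result structurally: $\Delta$ is the rank-one nilpotent built from $e_1$ and the form, so it commutes with anything in $Sp(4,\ZZ)$ that fixes $e_1$.
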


\begin{proof}
Since the sub-group $\Delta\cdot \Z^4$ is generated by $e_1$ and $X,Y$ fix $e_1$, we have the first equalities. The second ones follow from the fact that the three rank-one endomorphisms $\R^4\ni v\mapsto \Delta v$,  $\R^4\ni v\mapsto \Delta X  v\in\R^4$ and $\R^4\ni v\mapsto \Delta Y  v\in\R^4$ share the same kernel  (coinciding with the hyperplane $e_1^\perp$) and the same image    (i.e. the line $<e_1>$). Hence there exists a non-zero scalar $\lambda$ such that $\Delta X=\lambda \Delta$ (and analogously for $\Delta Y$); clearly $\lambda$ is an integer. We claim that it has no prime divisors: actually any prime $p$ dividing $\lambda$ would give the congruence $\Delta X\equiv 0 \pmod p$, which cannot hold since $X$ is invertible modulo $p$ and $\Delta$ has rank one modulo $p$. Hence $\lambda=\pm 1$, so $\Delta X=\pm \Delta$ and the same must hold for $\Delta Y$.
\end{proof}

\begin{lemma}
Let $X,Y\in Sp(4,\Z)$ be of the form \eqref{E.FormaMatrici} with $XYX^{-1}Y^{-1}=T^m$. Then
\begin{equation}\label{E.Commutatore2}
XY-YX=\pm m\Delta.
\end{equation}
\end{lemma}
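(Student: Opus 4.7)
The plan is to start from the commutator equation $XYX^{-1}Y^{-1} = T^m$ and rewrite it using the explicit formula $T^m = I + m\Delta$ (valid because $\Delta^2 = 0$). Multiplying both sides of $XYX^{-1}Y^{-1} = I + m\Delta$ on the right by $YX$ produces
\begin{equation*}
XY \;=\; (I + m\Delta)\,YX \;=\; YX + m\,\Delta YX,
\end{equation*}
so that $XY - YX = m\,\Delta YX$. The whole task now reduces to identifying $\Delta YX$ up to sign.

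For this I would invoke the preceding Lemma \ref{T.Delta}, which under the hypothesis $Xe_1 = Ye_1 = e_1$ gives $\Delta X = \varepsilon_X\Delta$ and $\Delta Y = \varepsilon_Y\Delta$ with $\varepsilon_X,\varepsilon_Y \in \{\pm1\}$. Composing these two relations yields
\begin{equation*}
\Delta YX \;=\; \varepsilon_Y\,\Delta X \;=\; \varepsilon_Y\varepsilon_X\,\Delta \;=\; \pm \Delta.
\end{equation*}
Substituting this into $XY - YX = m\,\Delta YX$ gives exactly $XY - YX = \pm m\,\Delta$, as required.

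There is no real obstacle here: the argument is a direct algebraic manipulation, and all the nontrivial content has already been packaged into Lemma \ref{T.Delta}. The only bookkeeping concern is the sign, but since the sign comes from the product $\varepsilon_X\varepsilon_Y \in \{\pm 1\}$, it is automatically absorbed into the single $\pm$ appearing in the statement. The fact that matrices of the shape \eqref{E.FormaMatrici} do satisfy $Xe_1 = Ye_1 = e_1$ is immediate by inspection of the first column, so the hypothesis of Lemma \ref{T.Delta} is in force.
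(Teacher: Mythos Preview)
Your proof is correct and follows essentially the same approach as the paper: derive $XY - YX = m\Delta YX$ from the commutator relation, then apply Lemma \ref{T.Delta} twice to reduce $\Delta YX$ to $\pm\Delta$. Your version is slightly more explicit in tracking the signs $\varepsilon_X,\varepsilon_Y$ and in noting that the shape \eqref{E.FormaMatrici} guarantees $Xe_1=Ye_1=e_1$, but the argument is identical.
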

 
 \begin{proof}
 From the relation   \eqref{E.Commutatore} one immediately derives
 \begin{equation*}
 XY-YX= m\Delta YX.
 \end{equation*}
 Applying twice the preceding lemma, we have $\Delta YX= \pm \Delta X =\pm \Delta$. 
 \end{proof}

Our aim now is proving that  the relation \eqref{E.Commutatore2} cannot hold for any odd integer $m$.  This will follow from an argument modulo $2$, leading to the next proposition (where $\Delta$ will denote the reduction of the previous matrix $\Delta$ modulo $2$): 

\begin{proposition}
The equation 
\begin{equation}\label{E.Commutatore3}
XY-YX=\Delta
\end{equation} admits no solution in matrices $X,Y\in Sp(4,\F_2)$ satisfying \eqref{E.FormaMatrici}.
\end{proposition}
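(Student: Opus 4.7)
The plan is to parametrize pairs $(X,Y)$ of the given shape, translate the equation $XY - YX = \Delta$ into scalar equations over $\F_2$, and run a finite case analysis culminating in a contradiction. Writing
\[
X = \begin{pmatrix} 1 & p & a & d \\ 0 & 1 & 0 & 0 \\ 0 & r & b & e \\ 0 & s & c & f \end{pmatrix}
\]
and similarly for $Y$ with primed entries, the symplectic condition forces $a = cr + bs$ and $d = fr + es$ in $\F_2$, together with $bf + ce = 1$ for the $SL(2,\F_2)$-block $M_X = \left(\begin{smallmatrix}b & e \\ c & f\end{smallmatrix}\right)$. The parameters $p, p'$ cancel in $XY - YX$ and are immaterial; the effective unknowns are $(r, s, r', s', M_X, M_Y)$.

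The first step reduces the equation modulo $\langle e_1 \rangle$. Since $\Delta$ takes values in $\langle e_1 \rangle$, the reduced equation reads $\bar X \bar Y = \bar Y \bar X$ on $\F_2^4 / \langle e_1 \rangle$; restricting to the induced action on $\langle e_3, e_4 \rangle$ forces $M_X M_Y = M_Y M_X$ in $SL(2,\F_2)$. Concretely, the four entries $(i,j)$ with $i,j \in \{3,4\}$ of $XY - YX = \Delta$ encode exactly these three commutativity relations (one appearing twice).

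Next, a direct expansion of the $(1,2)$-entry --- the only entry with non-zero right-hand side --- shows that $(XY - YX)_{1,2}$ equals the bilinear expression
\begin{equation*}
(c+c')rr' + (b+f')r's + (b'+f)rs' + (e+e')ss', \tag{$\ast$}
\end{equation*}
which we need to equal $1$. Simultaneously, the vanishing of the entries $(1,3), (1,4), (3,2), (4,2)$ yields four homogeneous linear equations in $(r, s, r', s')$, whose coefficients depend on $(b, c, e, f, b', c', e', f')$. We then split into cases according to $(c, c')$, using $\det M_X = \det M_Y = 1$ and commutativity: (i) $c = c' = 0$, which forces $b = f = b' = f' = 1$ with $e, e'$ free; (ii) up to interchanging $X$ and $Y$, $c = 1$ and $c' = 0$, which forces $M_Y = I$; (iii) $c = c' = 1$, which forces $e = e'$.

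In case (i), the expression in $(\ast)$ collapses to $(e+e')ss' = 1$, requiring $s = s' = 1$ and $e \neq e'$; but then the $(3,2)$-equation reads $e's + es' = e + e' = 0$, contradicting $e \neq e'$. In case (ii), the linear equations force either $(r', s') = 0$ or $b = f$, and in both subcases a brief computation makes $(\ast)$ vanish identically. In case (iii), commutativity gives $b + f = b' + f'$, hence $b + f' = b' + f$, and $(\ast)$ factors as $(b + f')(r's + rs')$; so one would need $b + f' = 1$ and $r's \neq rs'$, but then the $(3,2)$ and $(4,2)$ equations collapse to $r = r'$ and $s = s'$, giving $r's + rs' = 0$, again a contradiction. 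I expect the main obstacle to be the bookkeeping of the case analysis: the individual computations are routine linear algebra over $\F_2$, but the interplay between the commutativity constraints on $M_X, M_Y$ and the linear constraints on $(r,s,r',s')$ must be tracked carefully enough in each subcase for the contradiction with $(\ast) = 1$ to become visible.
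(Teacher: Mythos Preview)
Your argument is correct and takes a genuinely different route from the paper's. The paper first proves a lemma showing that $Xe_2 \neq e_2$, $Ye_2 \neq e_2$, and $Xe_2 \neq Ye_2$ --- in your parametrization, $(r,s) \neq 0$, $(r',s') \neq 0$, and $(r,s) \neq (r',s')$ --- and then, since over $\F_2$ any two distinct nonzero vectors in $\langle e_3,e_4\rangle$ may be sent to $e_3,e_4$ by a symplectic change of basis fixing $e_1,e_2$, reduces to the single normalized case $(r,s,r',s')=(1,0,0,1)$; the contradiction then comes from comparing the two sides on $e_2$. You bypass this normalization entirely and instead split on the lower $2\times 2$ blocks $M_X,M_Y$, using their commutativity and determinant-one condition to pin down the coefficients of the bilinear form $(\ast)$, then killing $(\ast)$ via the linear equations from the off-diagonal entries. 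The paper's route is a bit more conceptual (the $e_2$-lemma has a clean symplectic proof and immediately localizes the problem), whereas yours is a self-contained case check requiring no preliminary lemma. One point worth making explicit in your case~(iii): the assertion that $(3,2)$ and $(4,2)$ collapse to $r=r'$, $s=s'$ actually uses that $b+f'=1$ together with $e=e'$ and $b+f=b'+f'$ (plus $\det=1$) forces $bf=b'f'$, hence $\{b,f\}=\{b',f'\}$, and then $b+f'=1$ rules out $(b,f)=(f',b')$, so in fact $M_X=M_Y$ with $b+f=1$, $e=1$; only then do the two equations decouple as you claim.
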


\noindent{\tt Remark}. Of course, the above proposition concerns an explicitely given finite group, so it might be proved by exausting all possible cases. However, we want to prove it by developing   some general arguments which could be useful also in higher dimension.

\smallskip

Let us suppose to have a solution $(X,Y)$ to $XY-YX=\Delta$ in $Sp(4,\F_2)$ of the form \eqref{E.FormaMatrici}. We can apply the preceding lemmas, which hold {\it a fortiori} modulo $2$, and deduce in particular 
  from Lemma \ref{T.Delta}   that $\Delta$ commutes with $X$ and $Y$, so also the symplectic matrix $T=I+\Delta$ commutes with $X,Y$. Replacing if necessary $X$ by $TX$, which does not change the commutator, we can suppose that the coefficient on the first line - second column on $X$ vanishes. We can suppose the same for $Y$, so $X,Y$ will both be of the form
\begin{equation}\label{E.FormaMatrici2}
\left(\begin{matrix}1&0&*&*\cr 0&1&0&0\cr 0&*&*&*\cr 0&*&*&*
\end{matrix}\right).
\end{equation}
 The following lemma ensures that we can basically choose the form of the second column too.

\begin{lemma}
Let  $k$ be any field and  $(X,Y)$  be a solution to the equation \eqref{E.Commutatore3} with $X,Y\in 
 Sp (4, k)$ of the above form \eqref{E.FormaMatrici2}. 
Then $Xe_2\neq e_2$ and $Ye_2\neq e_2$. Also, $Xe_2\neq Ye_2$.
\end{lemma}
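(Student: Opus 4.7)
The plan is to exploit a basic rigidity in $Sp(4,k)$: any symplectic matrix $Z$ that fixes both $e_1$ and $e_2$ must be block-diagonal with respect to the decomposition $k^4 = \langle e_1,e_2\rangle \oplus \langle e_3,e_4\rangle$, i.e.\ $Z = I_2 \oplus \bar Z$ with $\bar Z\in SL_2(k)$. Indeed, $Z$ fixes the symplectic plane $\langle e_1,e_2\rangle$ pointwise, and being symplectic it also preserves the orthogonal $\langle e_1,e_2\rangle^{\perp} = \langle e_3,e_4\rangle$, on which it necessarily acts by an element of $SL_2(k)$.

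First I would prove $Xe_2 \neq e_2$ (and symmetrically $Ye_2 \neq e_2$): suppose for contradiction $Xe_2 = e_2$, so that $X = I_2 \oplus \bar X$ by the rigidity above. Setting $v_Y := Ye_2 - e_2 \in \langle e_3, e_4\rangle$, evaluating the given relation $XY - YX = \Delta$ at $e_2$ produces
\[
(XY - YX)e_2 \;=\; X(e_2 + v_Y) - Ye_2 \;=\; (\bar X - I)\,v_Y \;\in\; \langle e_3, e_4\rangle.
\]
But the right-hand side must be $\Delta e_2 = e_1$, which is not in $\langle e_3, e_4\rangle$; contradiction.

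Second, for $Xe_2 \neq Ye_2$, I would set $Z := Y^{-1}X$ on the assumption $Xe_2 = Ye_2$. Then $Z$ fixes both $e_1$ and $e_2$, so by the same rigidity $Z = I_2 \oplus \bar Z$. Writing $X = YZ$, the relation $XY - YX = \Delta$ becomes $Y(ZY - YZ) = \Delta$, i.e.\ $ZY - YZ = Y^{-1}\Delta$. The key observation is that $Y^{-1}\Delta = \Delta$: the operator $\Delta$ vanishes on $e_1^{\perp}$ and sends $e_2$ to $e_1$, while $Y^{-1}e_1 = e_1$. Hence $ZY - YZ = \Delta$, and evaluation at $e_2$ yields $(ZY - YZ)e_2 = Z(Ye_2) - Ye_2 = (\bar Z - I)\,v_Y \in \langle e_3, e_4\rangle$, again incompatible with the required value $e_1$.

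The main subtlety is spotting the reduction in the second case: by passing from $X$ to $Z = Y^{-1}X$, the hypothesis $Xe_2 = Ye_2$ collapses to $Ze_2 = e_2$ and we recover the setting of the first case --- but this requires noticing that $Y^{-1}\Delta = \Delta$, so that the commutator relation transfers unchanged from the pair $(X,Y)$ to $(Z,Y)$. Once this trick is in place, the entire lemma reduces to the single rigidity statement, and the proof is manifestly independent of the characteristic of $k$.
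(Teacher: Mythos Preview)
Your proof is correct. The first assertion ($Xe_2\neq e_2$) is argued exactly as in the paper: if $Xe_2=e_2$ then $X$ preserves $\langle e_3,e_4\rangle$, so $(XY-YX)e_2=(\bar X-I)v_Y\in\langle e_3,e_4\rangle$ cannot equal $e_1$.

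For the third assertion ($Xe_2\neq Ye_2$) the paper proceeds differently. Writing $Xe_2=Ye_2=e_2+v$ with $v\in\langle e_3,e_4\rangle$, the paper evaluates $(XY-YX)e_2$ directly to obtain $Xv-Yv=e_1$, and then invokes the symplectic condition: since $(Xe_2\mid Xv)=(e_2\mid v)=0$ and likewise for $Y$, one gets $(e_2+v\mid Xv-Yv)=0$, contradicting $(e_2+v\mid e_1)=1$. Your substitution $Z=Y^{-1}X$ is a neat alternative: it reduces the second case to the first by noting that $Y^{-1}\Delta=\Delta$ (this is the identity $Y\Delta=\Delta$ already established in the paper), so that $(Z,Y)$ again satisfies $ZY-YZ=\Delta$ with $Ze_2=e_2$. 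Your approach unifies the two cases under a single block-diagonal computation, while the paper's direct argument makes the role of the symplectic pairing more visible.
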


\begin{proof}
Suppose by contradiction that $Xe_2=e_2$ (the argument is symmetrical if $Ye_2=e_2$). Then, since the plane $<e_1,e_2>$ is invariant by multiplication by $X$, the same must be true of its orthogonal, which is $<e_3,e_4>$. Now, write $Ye_2=e_2+v$, where $v\in<e_3,e_4>$ (this is certainly possible since $Y$ is of the  form \eqref{E.FormaMatrici2}). The relation \eqref{E.Commutatore3} applied to the vector $e_2$ gives
\begin{equation*}
e_1=\Delta e_2=XYe_2-YXe_2=X(e_2+v)-Y e_2 =e_2+Xv-e_2-v=Xv-v,
\end{equation*}
which is impossible since $v$ and $Xv$ belong to the plane $<e_3,e_4>$.
This proves the first two inequalities. Suppose now $Xe_2=Ye_2$. Then from \eqref{E.Commutatore3} applied to the vector $e_2$ we obtain, writing $Xe_2=e_2+v=Ye_2$, with $v\in <e_3,e_4>$,
\begin{equation*}
XYe_2-YXe_2=Xv-Yv=e_1.
\end{equation*}
But from $Xe_2=e_2+v=Ye_2$ and $(e_2|v)=0$ we obtain 
\begin{equation*}
(e_2+v|Xv) = (e_2 +v| Yv) =0,
\end{equation*}
so $(e_2+v|Xv-Yv)=0$ which contradicts $Xv-Yv=e_1$ (since $(e_1|v)=0$ and $(e_1|e_2)=1$).
\end{proof}

Let us now go back to characteristic $2$.
Thanks to the above lemma and the form \eqref{E.FormaMatrici2} for $X$ we can  write $Xe_2=e_2+w$ for some non-zero vector $w\in <e_3,e_4>$.
Also, again by the above lemma, $Ye_2=e_2+w'$ for some vector $w'\neq w$ in the plane $<e_3,e_4>$. Since $w,w'$ are distinct non zero vector in the plane $<e_3,e_4>$, necessarily $(w|w')=1$, 
 so we can suppose without loss of generality that $w=e_3$ and $w'=e_4$. Then, remembering that $X,Y$ are symplectic, we deduce that they   take  the form  
\begin{equation}\label{E.FormaMatriceX}
X=\left(\begin{matrix}1&0&a&c\cr 0&1&0&0\cr 0&1&b&d\cr 0&0&a&c
\end{matrix}\right), \qquad 
Y=\left(\begin{matrix}1&0&e&g\cr 0&1&0&0\cr 0&0&e&g\cr 0&1&f&h
\end{matrix}\right)
\end{equation}
for some scalars $a,b,c\in\F_2$ with $ad\neq bc$ and $eh\neq fg$.
But then, applying once again the relation \eqref{E.Commutatore3} to the vector $e_2$ we obtain $Xe_4-e_4=Ye_3-e_3+e_1$, i.e.
\begin{equation*}
\left(\begin{matrix} c\cr 0\cr d\cr c-1\end{matrix}\right)=
\left(\begin{matrix} e+1\cr 0\cr e-1\cr f\end{matrix}\right).
\end{equation*}
 Since $e+1=e-1$, the above equality implies  $c=d=e+1\neq e$,  and $c=f+1$, so $f=e$, so either $c=d=0$ or $e=f=0$ which contradicts the non-vanishing of the two determinants $ad-bc$ and $eh-fg$.

This achieves the proof in the first case.

\medskip

\noindent{\tt Second case: a plane containing $e_1$ and contained in $e_1^\perp$  is invariant under $X$ and $Y$.}

Without loss of generality, we can suppose that this plane is $<e_1,e_3>$. It is more convenient to write the matrices with respect to the ordered basis $(e_1,e_3,e_2,e_4)$. With respect to this new ordered basis, the symplectic form is expressed by the matrix
\begin{equation*}
\left(\begin{matrix} 0&I\cr -I&0
\end{matrix}\right)
\end{equation*}
where $I=I_2$ denotes the $2\times 2$ identity matrix and $0$ the $2\times 2$ null-matrix.

The conjugate matrices, still denoted by $X,Y$, will take the form
\begin{equation*}
X=\left(\begin{matrix} A&AR\cr 0&{}^tA^{-1}
\end{matrix}\right),\qquad 
Y=\left(\begin{matrix} B&BS\cr 0&{}^tB^{-1}
\end{matrix}\right)
\end{equation*}
for two matrices $A,B\in\GL_2(\Z)$ and symmetric matrices $R,S$ (with integral coefficients).
The matrix corresponding to $T$ in this new basis is
\begin{equation*}
{T'}:=\left(\begin{matrix} I&E\cr 0&I
\end{matrix}\right)
\end{equation*}
where $E=\left(\begin{matrix} 1&0\cr 0&0
\end{matrix}\right)$.
Now the condition $XYX^{-1}Y^{-1}={T'}$ is equivalent to 
$XY-YX=(T'-I)YX$, which amounts to the two conditions
\begin{equation}\label{E.Matrix}
AB=BA,\qquad ABS+AR({}^tB^{-1})-BAR-BS({}^tA^{-1})=E({}^tB^{-1})({}^tA^{-1}).
\end{equation}
Now we prove that:

{\it  The above equation has no solution $(A,B,R,S)$ with $A,B\in\GL_2(\F_2)$ and $S,T$ symmetric (with coefficients in $\F_2$).}

To prove this claim, let us rewrite the second equality, after using the commutativity of $A,B$, as
\begin{equation*}
AB\left(S+B^{-1}R({}^tB^{-1})-R - A^{-1}S({}^tA^{-1})\right)=E({}^tB^{-1})({}^tA^{-1}).
\end{equation*}
 Observe that the right-hand side has rank one.
We  then conclude via the following lemma, which implies that the symmetric matrix inside the parenthesis cannot have rank one:

\begin{lemma}
Let $T=\left(\begin{matrix} a&b\cr b&c\end{matrix}\right)$ be a $2\times 2$ symmetric matrix with coefficients in $\F_2$. Let   $X\in\GL_2(\F_2)$ be an invertible matrix. Write $X\cdot T\cdot {}^tX$ as $ \left(\begin{matrix} a'&b'\cr b'&c'\end{matrix}\right)$. Then
\begin{equation*}
a+b+c=a'+b'+c'.
\end{equation*}
In particular,  every linear combination of symmetric matrices of the form $T-X\cdot T\cdot {}^tX$ has rank zero or two.
\end{lemma}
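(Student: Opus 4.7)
The plan is to verify the invariance of the map $T\mapsto a+b+c\in \F_2$ under the congruence action $T\mapsto XT{}^tX$ of $\GL_2(\F_2)$ by checking it on generators of this finite group, and then to deduce the rank statement from the classification of the few symmetric $2\times 2$ matrices over $\F_2$ with $\alpha+\beta+\gamma=0$.

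First I recall that $\GL_2(\F_2)\cong \mathfrak S_3$ is generated by the two involutions
\[
S=\begin{pmatrix} 0&1\\ 1&0\end{pmatrix},\qquad U=\begin{pmatrix} 1&1\\ 0&1\end{pmatrix}.
\]
Since $X\mapsto (T\mapsto XT{}^tX)$ is a group action, the set of linear forms in the entries of $T$ which are preserved is stable under intersection over generators, so it suffices to check invariance of $T\mapsto a+b+c$ under $S$ and $U$. A direct computation gives
\[
STS=\begin{pmatrix} c&b\\ b&a\end{pmatrix},\qquad U T\,{}^tU=\begin{pmatrix} a+c&b+c\\ b+c&c\end{pmatrix},
\]
and in each case the new sum $a'+b'+c'$ equals $a+b+c$ in $\F_2$ (using $3c=c$ for the second matrix). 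This proves the first assertion.

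For the second assertion, I note that any $\F_2$--linear combination $M$ of matrices of the form $T-XT{}^tX$ is symmetric with entries $\alpha,\beta,\gamma$ satisfying $\alpha+\beta+\gamma=0$ in $\F_2$. The four symmetric $2\times 2$ matrices over $\F_2$ satisfying this condition are the zero matrix together with
\[
\begin{pmatrix}1&0\\ 0&1\end{pmatrix},\quad \begin{pmatrix}1&1\\ 1&0\end{pmatrix},\quad \begin{pmatrix}0&1\\ 1&1\end{pmatrix},
\]
each of the latter three having determinant $\alpha\gamma-\beta^2=1$ in $\F_2$, hence rank $2$; the zero matrix has rank $0$, and no such matrix has rank $1$. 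No genuine obstacle arises: the only mild subtlety is picking convenient generators of $\GL_2(\F_2)$, after which the lemma reduces to two matrix multiplications and a four-element case check.
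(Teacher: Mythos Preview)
Your proof is correct. The two computations on the generators $S$ and $U$ are accurate (over $\F_2$ the top-left entry of $UT\,{}^tU$ is $a+2b+c=a+c$, and the sum $(a+c)+(b+c)+c=a+b+3c=a+b+c$), and the rank statement follows from the explicit enumeration you give.

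The paper argues differently and more conceptually: it observes that the three nonzero vectors $v_1,v_2,v_3$ of $\F_2^2$ satisfy $v_1+v_2+v_3=0$, and that for the symmetric bilinear form $(\cdot|\cdot)$ attached to $T$ one has
\[
a+b+c=(v_1|v_1)+(v_2|v_2)+(v_1|v_2)=(v_1|v_2)+(v_2|v_3)+(v_3|v_1),
\]
which is manifestly invariant under permutations of $v_1,v_2,v_3$, i.e.\ under $\GL_2(\F_2)\cong\mathfrak{S}_3$. This explains \emph{why} the invariant exists rather than merely verifying it, and would generalize more readily; your generator check, on the other hand, is entirely self-contained and requires no setup. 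You also spell out the rank-zero-or-two consequence explicitly via the four-element case check, whereas the paper leaves that implication to the reader.
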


\begin{proof}
Recall that a two-dimensional vector space over $\F_2$ contains exactly three non-zero vectors $v_1,v_2,v_3$, and that their sum vanishes. To a symmetric matrix $T=\left(\begin{matrix} a&b\cr b&c\end{matrix}\right)$ corresponds a symmetric bilinear form $(\cdot |\cdot)$ on $\F_2^2$. The quantity $a+b+c$ equals   the sum 
\begin{equation*}
(v_1|v_1)+(v_2|v_2)+(v_1|v_2)=(v_1|v_2)+(v_2|v_3)+(v_3|v_1)
\end{equation*}
 which is invariant under permutations of $v_1,v_2,v_3$, i.e. under transformations $T\mapsto XT{}^tX$. 
\end{proof}

\medskip

{\tt Proof of Theorem \ref{T.uno}}. If $m$ is an odd integer, then $T^m\equiv T \pmod 2$ and by Theorem \ref{T.due} $T^m$ cannot be a commutator.

Let now $m$ be even. We look for a solution $X,Y\in Sp(4,\Z)$  of the second type, i.e. with $<e_1,e_3>$ invariant by $X,Y$. We have just seen that we reduce to the matrix equation \eqref{E.Matrix}, where now $E$ is replaced by $mE$. We can find a solution by taking $R=0, B=I$ and reducing to
\begin{equation*}
AS({}^tA)-S=mE,
\end{equation*}
which is  solvable for  every even $m$, e.g. by taking $A=\left(\begin{matrix} 1&1\cr 0&1\end{matrix}\right)$ and
$S=\left(\begin{matrix} 0&m/2\cr m/2&0\end{matrix}\right)$.
%

\section{Alternative proof for $g=2$}

We give an alternative proof of Theorem \ref{T.due} (hence of Theorem \ref{T.uno}). This  proof makes use of the isomorphism between the group $Sp(4,\F_2)$ and the symmetric group $\mathfrak{S}_6$. 

{ This isomorphism has a nice and classical geometric interpretation, which we now briefly describe, in the spirit of the first part of our work.

Recall that every algebraic curve $C$ of genus $g=2$ has a canonical map which is a double covering of the projective line branched in six points, so that
there is an involution on $C$, called  the hyperelliptic involution, whose six fixed points  $P_1, \dots, P_6$ are called the
  Weierstrass points (they  are the critical points for the canonical map). 
 
 Hence every curve $\mathcal{C}$ of genus $2$ admits an affine  model of equation
 \begin{equation*}
y^2=(x-\alpha_1)\cdots (x-\alpha_6)
\end{equation*} 
for pairwise distinct complex numbers $\alpha_1,\ldots,\alpha_6$ (thus $P_i = (\al_i, 0)$).
 
 Given any fibration $S^*\to B^*$  in curves of genus $2$ and a point $b_0 \in B^*$, the action of the fundamental group $\pi_1(B^*,b_0)$ on the fibre of $b_0$ gives, as described above, a morphism   $\pi_1(B^*, b_0) \to \sM  ap_2$; this morphism also induces a permutation of the six Weierstrass points, hence a representation $\pi_1(B^*, b_0)\to \mathfrak{S}_6$. 
 
 On the other hand, the first homology group $H_1(C, \ZZ/2)$ is isomorphic to the subgroup  $Pic^0(C)[2]$ of the 2-torsion points in the Jacobian variety $Jac(C) \cong Pic^0(C)$.
 
 This subgroup is  isomorphic to $(\ZZ /2)^4$.  Since $ 2P_i \equiv 2 P_j \equiv K_C$  (here $K_C$ is the (degree two) canonical divisor of $C$),  and since   $  div (y) \equiv  \sum_i P_i - 3 K_C$, it has a basis given by the differences $P_1- P_2,  P_2 - P_3,  P_3-P_4, P_4 - P_5$ (indeed,  $\sum_i P_i = \
 div (y) \equiv  3 K_C \Rightarrow (P_1- P_2 ) + (P_3-P_4) + (P_5-P_6) \equiv 0$).

 The  morphism $\sM  ap_2\to Sp(4,\Z)$ can be composed with reduction modulo $2$, thus  giving a homomorphism $\sM  ap_2 \to Sp(4,\Z/2\Z)$,
 where the symplectic form modulo 2 is called the Weil pairing on the group  $Pic^0(C)[2]$, and corresponds to cup product in cohomology.

 To see that   the two groups $Sp(4,\Z/2\Z)$ and $\mathfrak{S}_6$ are indeed isomorphic, we observe that the half twist on a simple arc joining 
 $\al_i$ and $\al_j$, which yields a transposition exchanging  the two points $P_i, P_j$, lifts to a Dehn twist $\de_{i,j}$ which maps
  to a transvection $T_{i,j}$ on the class corresponding to
 $P_i - P_j$. 
 
 Hence  we have defined a homomorphism  $\mathfrak{S}_6 \hookrightarrow \GL (H_1(C, \ZZ/2\Z)) $, which is an embedding
 because  a permutation fixes all the  basis vectors if and only if it is the identity. Moreover, the two groups have the same cardinality, hence we have an isomorphism.

The alternative approach  carried out in this paragraph,  based on the mentioned identification $Sp(4,\F_2)\simeq \mathfrak{S}_6$, actually proves the 
following stronger result: 
\smallskip

\begin{theorem}\label{T.tre}
 The matrix $T\in Sp(4,\Z)$ is not contained in the commutator subgroup (the group generated by all the commutators) of $Sp(4,\Z)$.
 \end{theorem}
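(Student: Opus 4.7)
The plan is to exploit the exceptional isomorphism $Sp(4,\F_2)\cong \mathfrak{S}_6$ recalled at the start of this section. Reduction modulo $2$ is a group homomorphism
$$\pi : Sp(4,\Z)\longrightarrow Sp(4,\F_2)\cong\mathfrak{S}_6,$$
so it must send the commutator subgroup of $Sp(4,\Z)$ into $[\mathfrak{S}_6,\mathfrak{S}_6]=\mathfrak{A}_6$. Consequently, it suffices to show that $\pi(T)$ is an odd permutation in $\mathfrak{S}_6$: this alone forces $T$ outside the commutator subgroup of $Sp(4,\Z)$.

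The crux of the plan is to identify $\pi(T)$ explicitly as a transposition. I would argue this geometrically: $T=T_{e_1}$ is the symplectic transvection along the nonzero class $e_1\in H_1(C,\F_2)$, hence is the image under the symplectic representation of a Dehn twist around a non-separating simple closed curve on a genus-$2$ surface $C$. Under the hyperelliptic description recalled above, such a Dehn twist is the lift of a half-twist on $\P^1$ exchanging two of the six branch points $\alpha_i$; through the identification $Sp(4,\F_2)\cong\mathfrak{S}_6$ realised by the action on the Weierstrass points, $\pi(T)$ is therefore the transposition of the corresponding two indices. Alternatively, once a concrete model of the isomorphism is fixed (for instance, the action of $Sp(4,\F_2)$ on the six odd theta characteristics, or on the synthemes of a six-element set), one verifies by a finite computation that $T_{e_1}$ acts as a single swap: $T$ has order $2$ in $Sp(4,\F_2)$, and a fixed-vector count on the $15$ nonzero elements of $\F_2^4$ rules out the other two conjugacy classes of involutions, namely products of two or three disjoint transpositions.

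With this identification in hand the conclusion is immediate: a transposition is odd, so $\pi(T)\notin \mathfrak{A}_6$, and therefore $T\notin [Sp(4,\Z),Sp(4,\Z)]$. Since not lying in the commutator subgroup is strictly stronger than not being a single commutator, Theorem \ref{T.due} drops out as a corollary of this argument.

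The main obstacle is precisely the identification of $\pi(T)$ with a transposition inside the exceptional isomorphism; everything else --- reduction mod $2$, the well-known identity $[\mathfrak{S}_n,\mathfrak{S}_n]=\mathfrak{A}_n$, and the parity argument --- is routine. The non-routine ingredient is therefore to match symplectic transvections on primitive classes in $Sp(4,\F_2)$ with transpositions in $\mathfrak{S}_6$, either by invoking the full dictionary between the action of $\sM ap_2$ on $H_1(C,\F_2)$ and its permutation action on Weierstrass points, or by explicit calculation after fixing one of the classical combinatorial models of $\mathfrak{S}_6$. Once that dictionary is in place, the whole proof collapses to a one-line parity check.
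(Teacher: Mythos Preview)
Your proposal is correct and follows the same route as the paper: reduce modulo $2$, invoke $Sp(4,\F_2)\cong\mathfrak{S}_6$, and show that the image of $T$ is odd. The only difference is in the last step. The paper works in Serre's explicit model, computes the matrix $S$ corresponding to the double transposition $(1,2)(3,4)$, and observes $\mathrm{rank}(S+I)=2\neq 1=\mathrm{rank}(T+I)$, so $T$ avoids the unique \emph{even} involution class; you instead pin $T$ down directly as a transposition, either via the Dehn-twist/half-twist dictionary (which the paper itself sets up just before the theorem: the half-twist exchanging $P_i,P_j$ lifts to the transvection $T_{P_i-P_j}$) or via the same rank computation. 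Your fixed-vector version is in fact sharp: one checks in Serre's model that a single transposition such as $(1,2)$ acts as the transvection $T_{v_1+v_2}$, hence has $\mathrm{rank}(X-I)=1$, while both double and triple transpositions give rank $2$. So the rank-$1$ condition singles out the transposition class, confirming your claim that $\pi(T)$ is a transposition --- and incidentally showing that the paper's closing remark, that $T$ corresponds to $(1,2)(3,4)(5,6)$, is inaccurate (though harmless for the proof, since both odd involution classes work equally well).
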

\smallskip

Again, we prove the   even  stronger result   that  the reduction modulo $2$ of $T$ in $Sp(4,\F_2)$ does not belong to the   commutator subgroup.

\smallskip

 We describe now  more formally the isomorphism  $\mathfrak{S}_6\simeq Sp(4,\F_2)$  following \S  10.1.12 of Serre's book  \cite{serre2}.

Let $H\subset\F_2^6$ be the hyperplane of equation $\sum_{i=1}^6 x_i=0$. 
Consider the alternating bilinear form $H\times H\to \F_2$ sending $(x,y)\mapsto \sum_{i}x_iy_i$. The vector $(1,\ldots,1)$ is orthogonal to the whole space, and the induced bilinear form on the four dimensional vector space $V:=H/<(1,\ldots,1)>$ turns out to be non degenerate.
The group $\mathfrak{S}_6$ acts naturally on $\F_2^6$ leaving $H$ invariant; also it conserves the bilinear form and fixes the point $(1,\ldots,1)$, so it acts (faithfully) on $V$ as a group of symplectic automorphisms. Hence we obtain an embedding $\mathfrak{S}_6\hookrightarrow Sp(V)=Sp(4,\F_2)$. To prove that this embedding is in fact an isomorphism, we compare the orders of the two groups.

The order of $Sp(4,\F_2)$ can be computed as follows: the set of non-degenerate planes in $\F_2^4$ has cardinality $15\cdot 8 /6=20$, since one can choose a non-zero vector $v_1$ in $15$ ways and a second vector $v_2\in\F_2^4\setminus v_1^\perp$ in $16-8=8$ ways. Hence there are $15\cdot 8$ possibility for the ordered base $(v_1,v_2)$ and each plane admits six order bases, hence the cardinality of the set of non-degenerate planes is $20$.
The group $Sp(4,\F_2)$ acts transitively on the set of non-degenerate planes and the stabilizer of any such  plane is  isomorphic to $\SL_2(\F_2)\times\SL_2(\F_2)\simeq \mathfrak{S}_3\times \mathfrak{S}_3$, so has order $36$. It follows that 
\begin{equation*}
|Sp(4,\F_2)|=20\times 36=720=6!
\end{equation*}
We then obtain the sought isomorphism $\mathfrak{S}_6\simeq Sp(4,\F_2)$.
 
We want to prove that the matrix $T$ corresponds, via this isomorphism, to an odd permutation in $\mathfrak{S}_6$, hence it does not belong to the derived subgroup of $Sp(4,\F_2)$. 

Now, $T$ has order two, and every even permutation  of order two in $S_6$ is conjugate  to the permutation $(1,2)\circ (3,4)$. To prove the theorem, it then suffices to show that this permutation gives rise to a matrix in $Sp(4,\F_2)$  which is not conjugate to $T$.

The quotient space $V=H/<(1,\ldots,1)>$ is represented by the vectors $(x_1,\ldots,x_6)$ with vanishing last coordinate $x_6$ and vanishing sum of the coordinates.
A basis is provided by $v_1=(1,0,0,0,1,0), v_2=(0,1,0,0,1,0), v_3=(0,0,1,0,1,0)$ and $v_4=(0,0,0,1,1,0)$.

The permutation $(1,2)\circ (3,4)$ sends
\begin{equation*}
\begin{matrix}
v_1 &\leftrightarrow & v_2\\
v_3 &\leftrightarrow  &v_4
\end{matrix}
\end{equation*}
hence corresponds to the matrix  
\begin{equation*}
S=\left(\begin{matrix}0&1&0&0\\ 1&0&0&0\\ 0&0&0&1\\ 0&0&1&0\end{matrix}\right),
\end{equation*}
 which is not conjugate, not even in $\SL_4(\F_2)$, to the matrix $T$ (compare the ranks of $T+I$ and $S+I$).
This ends the proof of Theorem \ref{T.tre}.

\medskip
Actually, it turns out that the matrix $T$ corresponds  to a  permutation of $\mathfrak{S}_6$ conjugate  to $(1,2)\circ (3,4)\circ (5,6)$.

\smallskip

Note that if we consider the natural action of $Sp(4,\F_2)$ on the set $\F_2^4$, then the matrix $T$ induces an even permutation of the fifteen non-zero vectors of $\F_2^4$ (while it fixes the origin), so we could not prove  that it is not a commutator simply by looking at the natural embedding $Sp(4,\F_2)\hookrightarrow \mathfrak{S}_{15}$.

\smallskip
\section{Commutators in the Symplectic group $Sp (2g, \ZZ),\  g \geq 3$}

We now show: 

\begin{theo}
 In  every dimension $2g$ with $g\geq 3$, for every $m\geq 0$  
there exist symplectic matrices $X,Y\in Sp(2g,\Z)$ whose commutator equals 
\begin{equation*}
T^m=\left(\begin{matrix}
I+m\Delta &0&0&\ldots &0\cr
0&I&0 &\ldots &0\cr
\vdots&\vdots&\ddots& I&0\cr
0&0&\ldots&0&I
\end{matrix}\right)
\end{equation*}
\end{theo}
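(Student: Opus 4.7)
The plan is to realize $T^m$ as an explicit commutator of two elements of the Siegel parabolic subgroup $P \subset \Sp(2g, \ZZ)$. After reordering the basis of $\ZZ^{2g}$ as $(e_1, e_3, \ldots, e_{2g-1}, e_2, e_4, \ldots, e_{2g})$ (as in the previous section), the symplectic form has Gram matrix $\bigl(\begin{smallmatrix} 0 & I_g \\ -I_g & 0 \end{smallmatrix}\bigr)$, the parabolic $P$ consists of block upper triangular matrices $\bigl(\begin{smallmatrix} A & AR \\ 0 & A^{-T} \end{smallmatrix}\bigr)$ with $A \in \GL_g(\ZZ)$ and $R$ integer symmetric, and the given $T^m$ takes the block form $\bigl(\begin{smallmatrix} I_g & mE_{11} \\ 0 & I_g \end{smallmatrix}\bigr)$, where $E_{11}$ is the matrix unit in position $(1,1)$. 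I would then take $X = \bigl(\begin{smallmatrix} A & 0 \\ 0 & A^{-T} \end{smallmatrix}\bigr)$ in the Levi and $Y = \bigl(\begin{smallmatrix} I_g & S \\ 0 & I_g \end{smallmatrix}\bigr)$ in the (abelian) unipotent radical. A direct block multiplication yields
\[ [X, Y] = \begin{pmatrix} I_g & ASA^T - S \\ 0 & I_g \end{pmatrix}, \]
so the theorem reduces to producing, for every $m \geq 0$, a matrix $A \in \SL_g(\ZZ)$ and a symmetric integer matrix $S$ satisfying $ASA^T - S = m E_{11}$.

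For $g = 3$ I would exhibit the explicit pair
\[ A = \begin{pmatrix} 1 & 1 & 0 \\ 0 & 1 & 1 \\ 0 & 0 & 1 \end{pmatrix}, \qquad S = m \begin{pmatrix} 0 & 0 & -1 \\ 0 & 1 & 0 \\ -1 & 0 & 0 \end{pmatrix}, \]
and verify the identity $ASA^T - S = m E_{11}$ by a direct $3 \times 3$ calculation. For $g > 3$ the block extensions $\tilde A := \mathrm{diag}(A, I_{g-3}) \in \SL_g(\ZZ)$ and $\tilde S := \mathrm{diag}(S, 0_{g-3})$ are still integer symmetric and give $\tilde A \tilde S \tilde A^T - \tilde S = m E_{11}$ by block compatibility, completing the construction.

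The main obstacle is precisely the $g = 2$ parity obstruction proved in Section 6: for $g = 2$ and any upper unipotent $A$ the off-diagonal entry of $ASA^T - S$ equals $s_{22}$, so forcing it to vanish kills the only odd contribution to the $(1,1)$-entry, leaving $2 s_{12}$, which is always even. What rescues us for $g \geq 3$ is the presence of a third row/column of $S$: the entry $a_{23} = 1$ of the $A$ above couples the second and third coordinates, so the vanishing conditions on the off-diagonal entries of $ASA^T - S$ involve $s_{13}$ and $s_{23}$, which can be used to absorb $s_{22}$ away from the diagonal while leaving $s_{22}$ free to contribute an odd amount to the $(1,1)$-position. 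This breaks the $\F_2$-invariance responsible for the Section~6 obstruction. The case $m = 1$ being settled, all other $m \geq 0$ follow by scaling $S$ linearly in $m$.
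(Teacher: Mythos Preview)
Your proof is correct. The paper's own argument just writes down an explicit pair $X,Y\in\Sp(6,\ZZ)$ and says that $g=3$ suffices, with no structural explanation. But in fact those matrices fit your framework exactly: if one passes to the symplectic basis $(e_1,e_3,e_6;\,e_2,e_4,-e_5)$ associated to the Lagrangian $\langle e_1,e_3,e_6\rangle$, the paper's $X$ becomes the Levi element $\diag(A,A^{-T})$ with $A=\left(\begin{smallmatrix}1&1&1\\0&1&0\\0&1&1\end{smallmatrix}\right)$ and the paper's $Y$ becomes the unipotent element $\left(\begin{smallmatrix}I&S\\0&I\end{smallmatrix}\right)$ with $S=m\left(\begin{smallmatrix}0&1&0\\1&0&0\\0&0&-1\end{smallmatrix}\right)$, giving $ASA^T-S=mE_{11}$ just as in your reduction. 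So the two constructions coincide up to the particular choice of $(A,S)$; what your write-up adds is that it makes the Siegel-parabolic structure explicit, ties it directly to the paper's own method for the even-$m$ case in $g=2$, and explains conceptually (via the extra row and column of $S$) why the $\F_2$-obstruction from the previous section dissolves once $g\geq 3$.
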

Here, as before,  $\Delta$ is the matrix $\left(\begin{matrix}0&1\cr 0&0\end{matrix}\right).$

\begin{proof}
Clearly, it suffices to prove this statement in the case $g=3$, i.e. for $Sp(6,\Z)$.

Here is a concrete example:

\begin{equation*}
X=\left(\begin{matrix}
1&0&1&0&0&1\cr
0&1&0&0&0&0\cr
0&0&1&0&0&0\cr
0&0&0&1&1&0\cr
0&1&0&0&1&0\cr
0&0&1&0&0&1\cr
\end{matrix}\right),\qquad 
Y=\left(\begin{matrix}
1&0&0&m&0&0\cr
0&1&0&0&0&0\cr
0&m&1&0&0&0\cr
0&0&0&1&0&0\cr
0&0&0&0&1&0\cr
0&0&0&0&m&1\cr
\end{matrix}\right).
\end{equation*}
\end{proof}

We note that both $X$ and $Y$ are unipotent. In general, we can prove that for every symplectic solution $(X,Y)$ of $XYX^{-1}Y^{-1}=T$, both $X$ and $Y$ must have an eigenvalue equal to $\pm 1$.

\medskip

\begin{remark}
We want now to  show that without the hypothesis that the matrices be symplectic, we have examples even in dimension $\leq 4$. For instance, the following pair of (unipotent) matrices in $\SL_3(\Z)$ 
\begin{equation*}
X= \left(\begin{matrix}1&0&1\cr 0&1&0\cr 0&0&1
\end{matrix}\right),\qquad
Y= \left(\begin{matrix}1&0&0\cr 0&1&0\cr 0&m&1
\end{matrix}\right)
\end{equation*}
provides a solution to the equation 
\begin{equation*}
XYX^{-1}Y^{-1}= \left(\begin{matrix}1&m&0\cr 0&1&0\cr 0&0&1
\end{matrix}\right)=T^m.
\end{equation*}

\end{remark}

\subsection{The case $g \geq 3$, more general.}

For the sake of simplicity, we introduce the following notation:

\begin{definition}
Using the standard inclusion of $Sp (2g', \ZZ) \subset Sp (2g, \ZZ)$ for $g \geq g'$,
we define $T_1$ as the image of  the matrix
 \begin{equation*}
T:=\left(\begin{matrix}
1&1\cr 0&1
\end{matrix}\right),
\end{equation*}
in every $Sp (2g, \ZZ)$.

We define, for $ g \geq 2$, $T_2$ as the image of 
 the matrix 
\begin{equation*}
T_2=\left(\begin{matrix}
1&1&0&0\cr 0&1&0&0\cr 0&0&1&1\cr 0&0&0&1
\end{matrix}\right)
\end{equation*}

and we define similarly $T_k \in Sp (2g, \ZZ)$, for $ g \geq k$.

\end{definition}

We have the following 

\begin{theo}
$T_2$ is always a commutator ($g\geq 2$); also $T_3$ is always a commutator ($g\geq 3$). 

\end{theo}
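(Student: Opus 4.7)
The plan is to produce, for each of $T_2$ and $T_3$, an explicit pair of symplectic matrices whose commutator realizes it. By the standard inclusion $\Sp(2k,\ZZ)\subset \Sp(2g,\ZZ)$ for $g\geq k$ (extending a pair of matrices by the identity on the orthogonal complement of the first $2k$ basis vectors), it suffices to treat $T_2$ inside $\Sp(4,\ZZ)$ and $T_3$ inside $\Sp(6,\ZZ)$.

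Following the strategy already used in Section 5 for $T^{2m}$, first I would reorder the basis of $\ZZ^{2k}$ as $(e_1,e_3,\ldots,e_{2k-1},e_2,e_4,\ldots,e_{2k})$, so that the symplectic form acquires the standard block shape $\left(\begin{matrix} 0 & I_k \\ -I_k & 0 \end{matrix}\right)$ and $T_k$ becomes the Siegel upper-triangular matrix $\left(\begin{matrix} I_k & I_k \\ 0 & I_k \end{matrix}\right)$. Then I would look for the commutator inside the Siegel parabolic, setting
\[
X=\left(\begin{matrix} A & 0 \\ 0 & {}^tA^{-1} \end{matrix}\right), \qquad Y=\left(\begin{matrix} I_k & S \\ 0 & I_k \end{matrix}\right),
\]
with $A\in \GL(k,\ZZ)$ and $S={}^tS\in M_k(\ZZ)$; these conditions automatically guarantee $X,Y\in \Sp(2k,\ZZ)$. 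A straightforward block computation then gives
\[
[X,Y]\;=\;XYX^{-1}Y^{-1}\;=\;\left(\begin{matrix} I_k & A\,S\,{}^tA - S \\ 0 & I_k \end{matrix}\right),
\]
reducing the theorem to solving the Sylvester-type equation $A\,S\,{}^tA - S = I_k$ with $A\in \GL(k,\ZZ)$ and symmetric integral $S$.

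For $k=2$ I would take $A=\left(\begin{matrix} 2 & 1 \\ 1 & 1 \end{matrix}\right)$, $S=\left(\begin{matrix} -1 & 1 \\ 1 & 0 \end{matrix}\right)$, which solve the equation by a direct check. For $k=3$ I would take $A=\left(\begin{matrix} 1 & 0 & 1 \\ 1 & 1 & 1 \\ 0 & 1 & 1 \end{matrix}\right)$ (of determinant $1$) together with $S=\left(\begin{matrix} -2 & 0 & 1 \\ 0 & -1 & 1 \\ 1 & 1 & -1 \end{matrix}\right)$, which again satisfies $A\,S\,{}^tA - S = I_3$ by direct computation. The main obstacle is the $k=3$ construction: if $A$ is unipotent then $A-I$ is nilpotent and the operator $S\mapsto A\,S\,{}^tA - S$ acts nilpotently on symmetric integral matrices, so it cannot attain the value $I_3$; on the other hand many non-unipotent choices of $A$ (for example those with a block taken from the $k=2$ solution) yield a linear system for the entries of $S$ that is inconsistent over $\QQ$ or solvable only with denominators. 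The matrix $A$ above is chosen so that every row involves at least two distinct coordinate directions, making the resulting $6\times 6$ integral Sylvester system solvable over $\ZZ$.
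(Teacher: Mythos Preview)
Your proof is correct: I checked both explicit pairs $(A,S)$ and they solve $A\,S\,{}^tA - S = I_k$ for $k=2,3$, and the block reduction inside the Siegel parabolic is carried out correctly (the commutator computation $[X,Y]=\left(\begin{smallmatrix} I & AS\,{}^tA - S\\ 0 & I\end{smallmatrix}\right)$ is right, and both $X,Y$ are symplectic under the stated conditions).

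The route, however, is different from the paper's. The paper produces its explicit $X,Y$ by \emph{restriction of scalars}: over a number field $\mathbb{K}=\QQ(\lambda)$ one has $[A,B]=T_1$ in $\SL_2(\mathbb{K})$ with $A=\mathrm{diag}(\lambda,\lambda^{-1})$ and $B=\left(\begin{smallmatrix}1&(\lambda^2-1)^{-1}\\0&1\end{smallmatrix}\right)$, and one chooses $\mathbb{K}$ (quadratic for $T_2$, cubic for $T_3$) so that both $\lambda$ and $\lambda^2-1$ are units; viewing $\mathbb{K}^2\cong\QQ^{2k}$ in a suitable basis then yields symplectic integer matrices with commutator $T_k$. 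Your approach stays entirely inside the Siegel parabolic and solves the Sylvester equation $AS\,{}^tA-S=I_k$ directly, with no number theory. In fact the two are cousins: the paper's $A,B\in\SL_2(\mathbb{K})$ are precisely of Levi/unipotent type, so after restriction of scalars they land in your parabolic ansatz with $A$ the regular representation of $\lambda$ and $S$ that of $(\lambda^2-1)^{-1}$; your solutions simply use different $A$ (e.g.\ your $k=3$ matrix has characteristic polynomial $t^3-3t^2+2t-1$, not the paper's $t^3-2t^2-t+1$). The paper's method explains conceptually why solutions exist and suggests how to go further; yours is shorter and self-contained.

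One small quibble on the motivational aside: nilpotency of $S\mapsto AS\,{}^tA - S$ alone does not preclude $I_3$ lying in the image. The actual obstruction when $A$ is unipotent is that ${}^tA$ has a fixed vector $v\neq 0$, whence $v^t(AS\,{}^tA - S)v=0$ for every $S$, while $v^t I_k v\neq 0$. This does not affect the proof itself, which stands on the explicit verification.
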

\begin{proof}
Here is an explicit solution for $T_2$: $XYX^{-1}Y^{-1}=T_2$ where
\begin{equation*}
X=\left(\begin{matrix}
0&0&1&0\\ 0&1&0&1\\ 1&0&-1&0\\ 0&1&0&0
\end{matrix}\right), \qquad 
Y=\left(\begin{matrix}
1&-1&0&-1\\ 0&1&0&0\\ 0&-1&1&0\\ 0&0&0&1
\end{matrix}\right).
\end{equation*}

For $T_3$ we have: $T_3 = XY X^{-1} Y^{-1}$ with

\begin{equation*}
X= \left( \begin{matrix} 0&0&1&0&0&0\cr 0&0&0&1&0&1\cr 1&0&1&0&-1&0\cr
0&1&0&0&0&0\cr 0&0&-1&0&1&0\cr 0&1&0&0&0&1\end{matrix}\right), \qquad
Y=\left(\begin{matrix}  1&-2&0&1&0&0\cr 0&1&0&0&0&0\cr 0&1&1&-1&0&-1\cr
0&0&0&1&0&0\cr 0&0&0&-1&1&-1\cr 0&0&0&0&0&1 \end{matrix}\right)
\end{equation*}

\end{proof}

The idea for constructing $X,Y$ comes from the following remark: for every non-zero complex number $\lambda$, setting
\begin{equation*}
A=\left(\begin{matrix} \lambda&0\\ 0&\lambda^{-1}\end{matrix}\right), \qquad B=\left(\begin{matrix}1&(\lambda^2-1)^{-1}\\ 0&1 \end{matrix}\right)
\end{equation*}
we have
\begin{equation*}
ABA^{-1}B^{-1}=T_1.
\end{equation*}
We then look for a number field $\mathbb{K}$ containing a unit (of its ring of integers) $\lambda$ such that $\lambda^2-1$ is also a unit. Letting $n=[\mathbb{K}:\QQ]$ be its degree, we can view $\mathbb{K}^2$ as a vector space of dimension $2n$ over $\QQ$. The two matrices $A,B$ defined above induce automorphisms of this vector space, and in a suitable basis define two matrices $\tilde{A},\tilde{B}\in\SL_{2n}(\Z)$ satisfying $[A,B]=T_n$. The problem is defining a symplectic form $\mathbb{K}^2\to \QQ$ inducing the standard one on $\QQ^{2n}$, after identification $\mathbb{K}^2\simeq \QQ^{2n}$. 

It turns out that for $n=2$ there is only one choice for the number field $\mathbb{K}$, namely the field $\QQ(\lambda)$ where $\lambda$ is the `golden ratio' satisfying $\lambda^2=1+\lambda$. Identifying $\mathbb{K}^2\simeq \QQ^4$ via the basis ${1 \choose 0}, {0\choose 1}, {\lambda\choose 0}, {0\choose \lambda}$ we obtain from $A,B$ the matrices $X,Y$ of the theorem.

\smallskip

For $n=3$, again we have only one choice for the cubic number field, namely the field $\QQ(\lambda)$ where
\begin{equation*}
\lambda^3= 2\lambda^2 +\lambda-1.
\end{equation*}
The basis to be used to identify $\mathbb{K}^2$ with $\QQ^6$ is 
\begin{equation*}
{1\choose 0}, {0\choose 1},{\lambda\choose 0}, {0\choose\lambda},{1+\lambda-\lambda^2\choose 0}, {0\choose 1+\lambda-\lambda^2}.
\end{equation*}
Again, the matrices $X,Y$ are then obtained from the action of $A,B$ on $\mathbb{K}^2\simeq \QQ^6$.
\medskip

We can now obtain a more general result  using our previous results and the following:

\begin{rem}\label{sum}
(1) Assume that $A_i \in Sp (2g_i, \ZZ)$ is a commutator, for $i=1,2$. 

Then $A_1 \oplus A_2 \in Sp (2(g_1 + g_2), \ZZ)$ is also commutator.

(2) In particular, this holds for $A_2$  equal to the identity matrix.

\end{rem}

 Summarising, we obtained the following

\begin{theo}\label{last}

\begin{enumerate}
\item
$T_1^m$ is never a commutator for $g=1$.
\item
$T_1^m$ is a commutator if and only if $ m$ is even, for $g=2$.
\item
$T_1^m$ is always a commutator for $g \geq 3$.
\item
 $T_k$ is a commutator for all $ g \geq k \geq 2$.
 \item
  $T_k^m$ is a commutator for all $ g \geq 2k $, when $m$  is even.
 \item
 $T_k^m$ is a commutator for all $ g \geq 3k $, when $m$  is odd.

\end{enumerate}

\end{theo}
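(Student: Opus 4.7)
The plan is to recognize items (1)--(3) as direct restatements of earlier results, and to build items (4)--(6) from these by exploiting the additive structure furnished by Remark~\ref{sum}. Specifically, I would note that (1) is precisely part (iv) of the proposition of \S 6.1 applied to $T = T_1 \in SL(2,\ZZ)=Sp(2,\ZZ)$; (2) is Theorem~\ref{T.uno}; and (3) is the first theorem of \S 7, extended from the stated $g=3$ case to every $g\geq 3$ by Remark~\ref{sum}(2) (padding by the identity block in the complementary hyperbolic planes).

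For item (4), I would first observe that every integer $k\geq 2$ admits a representation $k = 2a+3b$ with non-negative $a,b$: take $(a,b)=(k/2,0)$ for even $k$, and $(a,b)=((k-3)/2,1)$ for odd $k\geq 3$. Up to a permutation of the $k$ active hyperbolic planes (realized by a signed permutation matrix in $Sp(2g,\ZZ)$, whose conjugation action preserves the property of being a commutator), the matrix $T_k$ then coincides with the block-diagonal direct sum of $a$ copies of $T_2$ and $b$ copies of $T_3$, with the remaining planes acted upon trivially. Since $T_2$ and $T_3$ are themselves commutators by the theorem of \S 7, Remark~\ref{sum}(1) applied iteratively shows this direct sum is a commutator in $Sp(2k,\ZZ)$, and Remark~\ref{sum}(2) then handles the identity padding to reach any $g\geq k$.

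Items (5) and (6) I would handle by the same template, with the conjugation/padding step unchanged. When $m$ is even, item~(2) supplies $X,Y\in Sp(4,\ZZ)$ with $XYX^{-1}Y^{-1}=T_1^m$; I would take $k$ independent copies of such a pair and form the block-diagonal direct sum in $Sp(4k,\ZZ)$, obtaining by Remark~\ref{sum}(1) a commutator which, after a symplectic plane-permutation, equals $T_k^m$. This gives (5) for $g=2k$, and Remark~\ref{sum}(2) extends it to every $g\geq 2k$. When $m$ is odd, the same argument works but now using item~(3), which provides $T_1^m$ as a commutator in $Sp(6,\ZZ)$; three hyperbolic planes are consumed per copy, delivering $T_k^m$ as a commutator in $Sp(6k,\ZZ)$ and hence in $Sp(2g,\ZZ)$ for every $g\geq 3k$.

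The one point I expect to verify explicitly, though it is not a serious obstacle, is that the block-diagonal construction of Remark~\ref{sum}(1) really recovers $T_k^m$ after conjugation by a symplectic element; this reduces to the observation that any permutation of the $g$ standard hyperbolic planes is induced by a signed permutation matrix lying in $Sp(2g,\ZZ)$. It is worth remarking that the presence of $T_3$ in the theorem of \S 7 is essential for item~(4) when $k$ is odd: a direct sum of $T_2$'s alone could never produce $T_k$ for odd $k$, which explains why the authors needed both explicit commutator factorizations, not only the one for $T_2$.
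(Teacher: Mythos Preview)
Your proposal is correct and matches the paper's intended argument: the theorem is stated in the paper as a summary (``Summarising, we obtained the following'') without proof, and the pieces you assemble---the proposition of \S6.1, Theorem~\ref{T.uno}, the two theorems of \S7, and Remark~\ref{sum}---are exactly the ingredients meant to be combined. One small simplification: for item~(4) no plane-permutation is needed, since the direct sum of $a$ copies of $T_2$ and $b$ copies of $T_3$ in $Sp(2(2a+3b),\ZZ)$ is already literally equal to $T_k$; the permutation step is genuinely required only for (5) and (6), where the $k$ copies of $T_1^m\in Sp(4,\ZZ)$ (resp.\ $Sp(6,\ZZ)$) land on non-adjacent hyperbolic planes, and there an ordinary (unsigned) permutation of the planes, which lies in $Sp(2g,\ZZ)$, does the job.
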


{\bf Acknowledgements. }  The present research was motivated by some question posed by Domingo Toledo
to the first named author concerning the singular fibres of the Cartwright-Steger surface, which led to  an intense
 exchange and  (still unpublished) joint  work on the geometric construction of this surface.
The first author is therefore  much   indebted to Domingo Toledo and Matthew Stover for very useful  communications.
Thanks also to Daniele Zuddas for pointing out the reference \cite{matsumoto}   and for very useful conversations
concerning the Polizzi-Rito-Roulleau surface. 
 

\bigskip

\address{Fabrizio Catanese\\ 
Lehrstuhl Mathematik VIII\\
Mathematisches Institut der Universit\"at Bayreuth\\
NW II,  Universit\"atsstr. 30\\
95447 Bayreuth - Germany\\}
\email{fabrizio.catanese@uni-bayreuth.de}

\bigskip

\address{Pietro Corvaja\\
Dipartimento di Scienze Matematiche,
Informatiche e Fisiche\\ 
via delle Scienze, 206\\
33100 Udine - Italy\\}
\email{pietro.corvaja@uniud.it}

\bigskip

\address{Umberto Zannier\\
Scuola Normale Superiore\\
Piazza dei Cavalieri, 7\\
56126 Pisa - Italy\\
}
\email{u.zannier@sns.it}

\end{document}